\DeclareFontFamily{OT1}{pzc}{}
\DeclareFontShape{OT1}{pzc}{m}{it}%
             {<-> s * [1.195] pzcmi7t}{}
\DeclareMathAlphabet{\mathscr}{OT1}{pzc}%
                                 {m}{it}
\newcommand{\tensor}{\otimes}
\newcommand{\Spec}{\operatorname{Spec}}
\newcommand{\isomto}{{\stackrel{\sim}{\;\longrightarrow\;}}}
\newcommand{\isomt}{{\stackrel{{\scriptscriptstyle{\sim}}}{\;\rightarrow\;}}}
\renewcommand{\O}{{\mathcal O}}
\renewcommand{\hom}{\operatorname{Hom}}
\newcommand{\cplx}{{\mathbb C}}
\newcommand{\Z}{{\mathbb Z}}
\newcommand{\aone}{{\mathbb A}^1}
\newcommand{\et}{\text{\'et}}
\newcommand{\dmeff}{{\mathbf{DM}}^{\textrm{eff}}_{k,-}}
\newcommand{\Sm}{{\mathcal Sm}}
\newcommand{\K}{{{\mathbf K}}}
\newcommand{\Sym}{{\operatorname{Sym}}}
\newcounter{intro}
\theoremstyle{plain}
\newtheorem{thm}{Theorem}[section]
\newtheorem{lem}[thm]{Lemma}
\newtheorem{cor}[thm]{Corollary}
\newtheorem{prop}[thm]{Proposition}
\newtheorem{question}[thm]{Question}
\newtheorem*{thm*}{Theorem}
\newtheorem*{problem*}{Problem}
\newtheorem{thmintro}{Theorem}
\theoremstyle{definition}
\newtheorem{defn}[thm]{Definition}
\theoremstyle{remark}
\newtheorem{rem}[thm]{Remark}
\newtheorem{remintro}[thmintro]{Remark}
\numberwithin{equation}{section}
\begin{document}
\pagestyle{fancy}
\renewcommand{\sectionmark}[1]{\markright{\thesection\ #1}}
\fancyhead{}
\fancyhead[LO,R]{\bfseries\footnotesize\thepage}
\fancyhead[LE]{\bfseries\footnotesize\rightmark}
\fancyhead[RO]{\bfseries\footnotesize\rightmark}
\chead[]{}
\cfoot[]{}
\setlength{\headheight}{1cm}

\author{\begin{small}Aravind Asok\thanks{Aravind Asok was partially supported by National Science Foundation Award DMS-0900813.}\end{small} \\ \begin{footnotesize}Department of Mathematics\end{footnotesize} \\ \begin{footnotesize}University of Southern California\end{footnotesize} \\ \begin{footnotesize}Los Angeles, CA 90089-2532 \end{footnotesize} \\ \begin{footnotesize}\url{asok@usc.edu}\end{footnotesize}             }

\title{{\bf Rationality problems and \\ conjectures of Milnor and Bloch-Kato}}
\date{}
\maketitle

\begin{abstract}
We show how the techniques of Voevodsky's proof of the Milnor conjecture and the Voevodsky-Rost proof of its generalization the Bloch-Kato conjecture can be used to study counterexamples to the classical L\"uroth problem.  By generalizing a method due to Peyre, we produce for any prime number $\ell$ and any integer $n \geq 2$, a rationally connected, non-rational variety for which non-rationality is detected by a non-trivial degree $n$ unramified \'etale cohomology class with $\ell$-torsion coefficients.  When $\ell = 2$, the varieties that are constructed are furthermore unirational and non-rationality cannot be detected by a torsion unramified \'etale cohomology class of lower degree.
\end{abstract}


\section{Introduction}
Suppose $n$ is an integer coprime to the characteristic of $k$, and $L/k$ is a finitely generated separable field extension.  Consider the functor on $k$-algebras defined by $A \mapsto H^i_{\et}(A,\mu_n^{\tensor j})$ (we abuse terminology and write $A$ instead of $\Spec A$ for notational convenience).  Given a discrete valuation $\nu$ of $L/k$ with associated valuation ring $A$, one says that a class $\alpha \in H^i_{\et}(L,\mu_n^{\tensor j})$ is unramified at $\nu$ if $\alpha$ lies in the image of the restriction map $H^i_{\et}(A,\mu_n^{\tensor j}) \to H^i_{\et}(L,\mu_n^{\tensor j})$.  For any integers $i,j$, Colliot-Th\'el\`ene and Ojanguren \cite{CTO} defined the unramified cohomology group $H^i_{ur}(L/k,\mu_n^{\tensor j})$ as the subgroup of $H^i_{\et}(L,\mu_n^{\tensor j})$ consisting of those classes $\alpha$ that are unramified at every discrete valuation of $L$ trivial on $k$.  Colliot-Th\'el\`ene and Ojanguren also proved that the groups $H^i_{ur}(X/k,\mu_n^{\tensor j})$ are stable $k$-birational invariants \cite[Proposition 1.2]{CTO}.  If $X$ is any smooth proper $k$-variety, we write $H^i_{ur}(X/k,\mu_n^{\tensor j})$ for $H^i_{ur}(k(X)/k,\mu_n^{\tensor j})$.  For another point of view on these statements see \cite[Theorem 4.1.1]{CTPurity}.

The unramified cohomology groups can be used to detect counterexamples to the L\"uroth problem (over $\cplx$).  To set the stage, recall that in their celebrated work \cite{ArtinMumford}, building on a suggestion of C.P. Ramanujam, Artin and Mumford showed that the torsion subgroup of the singular cohomology group $H^3(X,\Z)$ is a birational invariant for smooth proper complex varieties $X$.  By explicitly exhibiting a conic bundle over a rational surface and a $2$-torsion class in this group, they produced an ``elementary" example of a unirational non-rational variety.

When $i = 2$, $j = 1$, and $L$ is the function field of a smooth proper $k$-variety $X$, the group $H^2_{ur}(L/k,\mu_n)$ admits a geometrically appealing interpretation.  In this case, one can identify the group $H^2_{ur}(L/k,\mu_n)$ with the $n$-torsion in the cohomological Brauer group of $X$ using results of Auslander-Goldman and Grothendieck ({\em cf}. \cite[\S6 and \S7]{GrothBrauerIII}).  In this context, it was Saltman who first applied the unramified Brauer group to rationality problems  \cite{Saltman1,Saltman2}, and Saltman mentions a relationship between the Artin-Mumford example and the unramified Brauer group.  In fact, as Colliot-Th\'el\`ene and Ojanguren \cite[\S 2]{CTO} make explicit, the $2$-torsion class constructed by Artin and Mumford is a $2$-torsion class in the cohomological Brauer group (see \cite[\S 8]{Ojanguren} for a nice overview of this circle of ideas).

Colliot-Th\'el\`ene and Ojanguren went further and produced an example of a unirational non-rational variety where non-rationality was detected by means of a degree $3$ unramified cohomology class in the function field of a quadric over a transcendence degree $3$ extension of $\cplx$.  Furthermore, they proved that the cohomological Brauer group of their example was trivial.  They also suggested that it should be possible to produce higher dimensional varieties that were unirational, not-stably rational, where non-rationality is detected by a class of degree $n$, yet could not be detected by lower degree classes.

A key algebraic input in the construction of \cite{CTO} was a result of Arason \cite[p. 489]{Arason} describing the kernel of the restriction map $H^3_{\et}(k,\mu_2^{\tensor 2}) \to H^3_{\et}(L/k,\mu_2^{\tensor 2})$ for $L$ the function field of a special quadratic form: a (neighbor of a) Pfister $3$-form.  Using this result, the degree $3$ unramified invariant was constructed by an intricate geometric argument. Later, using a generalization of Arason's theorem due to Jacob and Rost \cite[p. 559]{JacobRost}, Peyre \cite{Peyre1} produced examples of fields where non-rationality was detected by a degree $4$ unramified cohomology class.  While he showed that the Brauer group was trivial for his examples, he did not establish vanishing of degree $3$ unramified cohomology.  The main result of this paper shows that the construction of Colliot-Th\'el\`ene-Ojanguren, as generalized by Peyre, extends to all degrees.

\begin{thmintro}[See Theorem \ref{thm:nonvanishing}]
\label{thmintro:mainexample}
For every integer $n > 0$, there exists a smooth projective complex variety $X$ that is unirational, for which $H^i_{ur}(X,\mu_2^{\tensor i})$ vanishes for each $i < n$, yet $H^n_{ur}(X,\mu_2^{\tensor n}) \neq 0$, and so $X$ is not $\aone$-connected (nor stably $k$-rational).
\end{thmintro}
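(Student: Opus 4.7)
The plan is to extend Peyre's degree-$4$ construction to arbitrary degree $n$ by systematically replacing the Arason--Jacob--Rost kernel computations with their degree-$n$ analogues, which are now available as consequences of the Voevodsky--Rost proof of the Bloch--Kato conjecture. I would produce a smooth projective unirational $\cplx$-variety $X=X_n$ together with an explicit symbol class on its function field, and argue separately for (i) non-triviality of this class modulo ramification in degree $n$, and (ii) vanishing of the unramified cohomology in all lower positive degrees. Stable non-rationality and failure of $\aone$-connectedness would then be immediate consequences of the Colliot-Th\'el\`ene--Ojanguren birational invariance stated in the introduction.

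\textbf{Construction.} Let $K=\cplx(t_1,\ldots,t_{2n})$ and consider the $n$-fold Pfister form $\phi = \langle\langle t_1,\ldots,t_n\rangle\rangle$ together with a suitable Pfister neighbor $\psi$ built out of the remaining indeterminates, after Peyre's multi-quadric recipe. I would take $X$ to be a smooth projective model over $\cplx$ of an appropriate quadric bundle whose generic fiber encodes both $\phi$ and $\psi$, chosen so that its function field $\cplx(X)$ kills the ``auxiliary'' symbols needed to guarantee unirationality, while still receiving the distinguished symbol $\alpha=\{t_1,\ldots,t_n\} \in H^n_{\et}(K,\mu_2^{\tensor n})$ nontrivially. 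The candidate nonzero unramified class is the image of $\alpha$ in $H^n_{\et}(\cplx(X),\mu_2^{\tensor n})$.

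\textbf{Main computations.} Non-triviality and unramifiedness of $[\alpha]$ split into a local residue analysis at each discrete valuation of $\cplx(X)/\cplx$: for valuations lying over the base $K$, the class lifts to $H^n_{\et}$ of the valuation ring by construction, and for horizontal divisors one uses purity together with a standard residue computation on the generic fiber, controlled by the fact that $\phi$ is a Pfister form. The crux is the vanishing of $H^i_{ur}(X,\mu_2^{\tensor i})$ for $0<i<n$. Here I would use the Bloch--Kato isomorphism to identify \'etale cohomology with Milnor $K$-theory modulo $2$, so that the computation transports to a question about $K^M_*(\cplx(X))/2$. The Voevodsky--Rost description of the motivic cohomology of Pfister quadrics yields the generalized Arason theorem: the kernel of the restriction map from $K^M_*(K)/2$ to $K^M_*(\cplx(X))/2$ is the ideal generated by $\alpha$ (together with the analogous symbols arising from $\psi$). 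Because $\alpha$ lives in degree $n$, this ideal contributes nothing in degrees below $n$; combined with the fact that $K$ itself has trivial unramified cohomology in positive degrees (being purely transcendental over $\cplx$), this forces $H^i_{ur}(X,\mu_2^{\tensor i})=0$ for all $0<i<n$.

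\textbf{Unirationality and main obstacle.} Unirationality of $X$ follows because every Pfister quadric becomes rational after a single quadratic extension, so $X$ is dominated by a rational variety of the same dimension; this feature drove Peyre's choice of $\psi$ as a Pfister \emph{neighbor}, and the same device works for all $n$. The principal obstacle, and the step that genuinely requires the Voevodsky--Rost machinery rather than an ad hoc computation as in Arason or Jacob--Rost, is the kernel statement for restriction to function fields of $n$-fold Pfister quadrics for arbitrary $n$; once this is established, both the non-vanishing of $[\alpha]$ in degree $n$ and the vanishing of unramified cohomology in lower degrees follow by formal manipulations in Milnor $K$-theory, and the whole structure of Peyre's argument carries through essentially unchanged.
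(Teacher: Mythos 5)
Your high-level plan---replace the Arason and Jacob--Rost kernel inputs with the Orlov--Vishik--Voevodsky theorem and rerun Peyre's argument---is exactly the paper's strategy.  But the proposal leaves out the step that is genuinely hard, and the construction needs to be pinned down before the strategy works.

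On the construction: the paper takes $k = \cplx(t_1^{2},\ldots,t_{2n}^{2})$, an $\mathbb{F}_2$-vector space $V$ of dimension $2n$ with $\phi^1(v_j^{\vee})$ the class of $t_j^{2}$, and the one-dimensional $S \subset \Lambda^n V$ spanned by $v_1 \wedge \cdots \wedge v_n + v_{n+1} \wedge \cdots \wedge v_{2n}$, which satisfies $S \neq S_{dec}$.  It then fixes a basis $I$ of $S^{\perp}$ consisting of decomposable vectors and sets $K = k(\prod_{s \in I} Q_{\underline{s}})$.  The collection of quadrics is not ``one encoding $\phi$ and a neighbor $\psi$''---it is indexed by $I$ precisely so that Theorem \ref{thm:ovv} forces $\ker \phi^n_K = S^{\perp}$, which is the hypothesis Peyre's criterion (Theorem \ref{thm:peyre}) needs.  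That criterion is also what replaces your proposed hands-on ``local residue analysis'' for the degree-$n$ nonvanishing; carrying out such an analysis directly would itself be nontrivial.

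The genuine gap is the vanishing for $0 < i < n$.  You argue that since the kernel of restriction from $k$ to $K$ is the ideal generated by degree-$n$ symbols, it ``contributes nothing in degrees below $n$,'' and that combined with triviality of the unramified cohomology of the purely transcendental base this ``forces'' $H^i_{ur}(X/\cplx,\mu_2^{\tensor i})=0$.  That is a non-sequitur: injectivity of $H^i_{\et}(k,\mu_2^{\tensor i}) \to H^i_{\et}(K,\mu_2^{\tensor i})$ together with vanishing of $H^i_{ur}(k/\cplx,\mu_2^{\tensor i})$ says nothing about an arbitrary class of $H^i_{ur}(K/\cplx,\mu_2^{\tensor i})$.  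What is actually needed, and what the paper proves as Theorem \ref{thm:vanishing}, consists of two further inputs.  First, one needs \emph{surjectivity} of $H^i_{\et}(k,\mu_2^{\tensor i}) \to H^i_{ur}(Q_{\underline{a}}/k,\mu_2^{\tensor i})$ for $i<n$, i.e.\ vanishing of the cokernel, which is the other half of Theorem \ref{thm:ovv} and relies on generic rationality of the Pfister quadric (Kahn, Kahn--Sujatha).  Second, once a class unramified over $\cplx$ is known to descend to $H^i_{\et}(k,\mu_2^{\tensor i})$, one still must show it lies in $H^i_{ur}(k/\cplx,\mu_2^{\tensor i})$: for each geometric discrete valuation $\nu$ of $k$ one has to rewrite the defining symbol so that $a_1,\ldots,a_{n-2} \in \O_{\nu}^{*}$ (Lemma \ref{lem:rewriting}), reduce modulo $\nu$ to get a compatible valuation $\nu'$ of $K$, identify $\kappa_{\nu'}$ as the function field of a (possibly lower-length) small Pfister quadric over $\kappa_{\nu}$, and then chase the commuting square of residue maps using injectivity of the vertical arrows, itself another application of Theorem \ref{thm:ovv}.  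None of this is ``formal''; it is the technical core of the lower-degree vanishing, and your sketch omits it.
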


The fields used to construct the examples are function fields of products of quadrics defined by Pfister $n$-forms over rational function fields in many variables.  The result of Arason used by Colliot-Th\'el\`ene-Ojanguren (and the subsequent generalization by Jacob-Rost) was extended to all (neighbors of) Pfister $n$-forms by Orlov-Vishik-Voevodsky \cite[Theorem 2.1]{OrViVo} as a consequence of Voevodsky's spectacular affirmation of Milnor's conjecture that the mod $2$ norm residue homomorphism is an isomorphism \cite{VMilnor}; these results provide the main new ingredient in construction of our examples.

\begin{remintro}
As was pointed out to the author by the referee of a previous version of this paper, examples of the sort produced here can also be produced by combining results of Izhboldin \cite{Izhboldin} with \cite{OrViVo}; see \cite[Lemma 9.5]{Izhboldin}, which makes reference to \cite[Lemma 6.4 and Lemma 8.12]{Izhboldin}.  Nevertheless, our approach, following Peyre, is different.  Moreover, examples such as the above have not (to the best of the author's knowledge) appeared explicitly anywhere in print.
\end{remintro}

Peyre also observed that for $\ell$ a prime different from $2$, it was possible to construct unirational non-rational varieties whose non-rationality was detected by degree $n$ unramified cohomology classes when $n$ was small.  Indeed, he observed \cite[Proposition 5]{Peyre1} that the Brauer-Severi varieties attached to cyclic algebras can be used to construct unirational varieties with a non-trivial degree $2$ unramified $\ell$-torsion class.  He also observed \cite[Proposition 5]{Peyre1} that it is possible to construct unirational varieties where non-rationality is detected by a degree $3$ unramified $\ell$-torsion class, yet for which the (unramified) Brauer group is trivial.  Again, each example relied on a variant of the result of Arason mentioned above (in the first case, the necessary result was provided by Amitsur and in the second case by Suslin; see \cite{Peyre1} for precise references).  As a partial generalization of these results, we establish the following.

\begin{thmintro}[See Theorem \ref{thm:rcrostvarieties}]
\label{thmintro:lgeneric}
For any prime number $\ell$ and any integer $n \geq 2$, there exists a smooth projective rationally connected complex variety $X$ such that $H^n_{ur}(X/\cplx,\mu_{\ell}^{\tensor n})$ is non-trivial.  In particular, $X$ is not $\aone$-connected (nor stably $k$-rational).
\end{thmintro}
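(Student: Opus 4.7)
For $\ell=2$ the variety produced by Theorem \ref{thmintro:mainexample} is already unirational, hence rationally connected, so only the case of an odd prime $\ell$ requires work. The plan is to imitate the construction behind Theorem \ref{thmintro:mainexample} (itself modeled on Peyre \cite{Peyre1}), replacing the role of Pfister-neighbor quadrics — the natural splitting varieties for symbols in $K^M_*(k)/2$ — by their odd-primary analogues, the \emph{norm varieties} (also called Rost $\nu_{n-1}$-varieties) constructed by Rost as a key input to the Voevodsky-Rost proof of the Bloch-Kato conjecture.

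Concretely, set $k_0 = \cplx(a_1,\dots,a_n)$ and consider the generic pure symbol $\alpha = \{a_1,\dots,a_n\} \in K^M_n(k_0)/\ell$. Over $k_0$ there is a smooth projective norm variety $X_\alpha$ whose function field kills $\alpha$. The candidate complex variety $X$ is then a smooth projective model of a flat integral model of $X_\alpha$ over the $\cplx$-affine space $\Spec \cplx[a_1,\dots,a_n]$. Rational connectedness of $X$ reduces, via Graber-Harris-Starr, to rational connectedness of $X_\alpha$ over $k_0$; for norm varieties this is extracted from Rost's explicit construction, together with the fact that $X_\alpha$ acquires a rational point over every field extension splitting $\alpha$ and hence carries enough rational curves.

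The heart of the matter is the identification of the kernel of the restriction
\[
H^n_{\et}(k_0,\mu_\ell^{\otimes n}) \longrightarrow H^n_{\et}(k_0(X_\alpha),\mu_\ell^{\otimes n})
\]
as the cyclic subgroup generated by $\alpha$; this is the odd-primary analogue of the Arason / Orlov-Vishik-Voevodsky result used in Theorem \ref{thmintro:mainexample}, and it is a consequence of the Bloch-Kato theorem together with the motivic cohomological properties of $X_\alpha$ established in the Voevodsky-Rost work. A Bloch-Ogus purity and specialization argument in the style of \cite{CTO} and \cite{Peyre1}, applied to the spread-out family over $\Spec \cplx[a_1,\dots,a_n]$, then shows that $\alpha$ lifts to a nontrivial unramified class in $H^n_{ur}(X/\cplx,\mu_\ell^{\otimes n})$. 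The main obstacle, and the reason the conclusion is weaker than Theorem \ref{thmintro:mainexample}, is establishing unirationality: for odd $\ell$ no Pfister-neighbor-type description of norm varieties is available, and one must be content with rational connectedness.
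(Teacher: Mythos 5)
Your outline correctly identifies the three ingredients one must assemble (Rost/norm varieties, Merkurjev--Suslin's computation of the kernel of restriction, and Peyre's method), but the way you put them together contains a genuine error at the decisive step, and your setup is missing the structure that Peyre's method actually needs.

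First, the claim that ``$\alpha$ lifts to a nontrivial unramified class in $H^n_{ur}(X/\cplx,\mu_\ell^{\otimes n})$'' cannot be correct as stated: $X_\alpha$ is a splitting variety for $\alpha$, so $\alpha$ restricts to \emph{zero} in $H^n_{\et}(k_0(X_\alpha),\mu_\ell^{\otimes n})$. The kernel computation you invoke (the odd-primary analogue of Orlov--Vishik--Voevodsky, i.e.\ Merkurjev--Suslin) is precisely the statement that $\alpha$ generates the kernel of restriction --- which is exactly why $\alpha$ itself cannot furnish the class. In Peyre's method the unramified class is built out of a \emph{different} element of $H^n_{\et}$ of the base field: one that survives restriction, and whose survival together with the precise identification of the kernel forces its residues to vanish over $\cplx$.

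Second, the single-symbol construction over $k_0 = \cplx(a_1,\dots,a_n)$ does not leave enough room for Peyre's linear algebra. In Peyre's formalism one needs a subspace $S\subset\Lambda^n V$ with $S\neq S_{dec}$ and a basis of $S^\perp$ consisting of decomposable vectors. If $V$ has dimension $n$ (the natural choice over $\cplx(a_1,\dots,a_n)$), then $\Lambda^n V$ is one-dimensional and spanned by a decomposable vector, so $S=S_{dec}$ always and no nontrivial unramified class can be extracted. The paper therefore works over $k=\cplx(t_1^\ell,\dots,t_{2n}^\ell)$ with $V$ of dimension $2n$, takes $S$ spanned by the \emph{non}-decomposable element $v_1\wedge\cdots\wedge v_n + v_{n+1}\wedge\cdots\wedge v_{2n}$, chooses a decomposable basis $I$ of $S^\perp$, and passes to the function field of a \emph{product} $\prod_{s\in I} Y_{\underline{s}}$ of Rost varieties, one for each basis vector. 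Only then does the kernel of restriction coincide with $S^\perp$ (by an induction on $|I|$ using Merkurjev--Suslin), and only then does Peyre's Theorem produce a nonzero element of $H^n_{ur}$.

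Finally, your rational connectedness argument is too weak: having rational points over extensions splitting $\alpha$ does not by itself imply rational connectedness. The paper proves this by an explicit inductive construction (Proposition~\ref{prop:rostvarietiesrcprop}): the Rost variety $N(Y,a,m)$ receives a dominant map of degree $m$ from $(Y^m\setminus\Delta)\times E_a$, where $E_a$ is the rational hypersurface $\{\prod z_i = a\}\subset\mathbb{A}^m$, and then descends to the $\cplx$-model via Graber--Harris--Starr and specialization (Corollary~\ref{cor:rostvarietiesarerationallyconnected}). This explicit geometry is necessary; it does not follow formally from the splitting-variety property.
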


If $\ell$ is a prime number different from $2$, in the course of the proof of the Bloch-Kato conjecture certain new varieties---$\ell$-generic splitting varieties---were introduced to assume the role played by Pfister quadrics in the proof of the Milnor conjecture.  The varieties used by Peyre in his constructions are precisely of this form.  Theorem \ref{thmintro:lgeneric} relies on the explicit constructions of $\ell$-generic splitting varieties due to Voevodsky and Rost \cite{Rost, SuslinJoukhovitski}, and results of \cite{MerkurjevSuslin} that provide partial analogs for the results of Orlov-Vishik-Voevodsky, Amitsur or Suslin mentioned above.  We prove rational connectivity of some explicit $\ell$-generic splitting varieties and again apply the method of Peyre to construct non-trivial unramified classes.

The paper closes with a discussion of the unramified cohomology of smooth proper rationally connected varieties.  After recalling some vanishing statements, we ask some questions about when unramified cohomology can be non-trivial.  Even for smooth proper rationally connected $3$-folds, the answer to these questions seems to be unknown in general.

\subsubsection*{Acknowledgements}
This paper was originally a piece of \cite{ABirational}, and the current version is a revision and extension of results that originally were stated there.  We thank the referees of the old version for many helpful comments and corrections.  Our interest in these examples arose from a desire to understand the zeroth $\aone$-homology sheaf of a smooth proper variety, which was an outgrowth of joint work with Fabien Morel \cite{AM}; we thank him for much encouragement and many inspiring discussions.  We thank Sasha Merkurjev for pointing out a simplification of our original proof of rational connectivity of Rost varieties, and Jean-Louis Colliot-Th\'el\`ene for extremely helpful correspondence, especially regarding the questions at the end of the paper.  Finally, we thank Christian Haesemeyer for useful conversations and Zinovy Reichstein for encouragement.

\section{Unramified cohomology and splitting varieties}
Throughout this work, the letters $F,k,$ or $K$ will be used to denote fields containing $\cplx$ as subfields; while this restriction is not necessary everywhere, it simplifies the presentation in a number of places.  In particular, if $\ell$ is a prime number, we will fix, once and for all, a primitive $\ell$-th root of unity $\tau$ and consequently an isomorphism $\Z/\ell \cong \mu_{\ell}$.  Throughout the paper $K^M_*(F)$ denotes the graded Milnor K-theory ring of the field $F$ and $K^M_*(F)/n$ denotes the mod $n$ Milnor K-theory ring of $F$.

We write $\Sm_k$ for the category of schemes separated, smooth, and finite type over $k$; we will also refer to elements of $\Sm_k$ as varieties.  We view $\Sm_k$ as a site by equipping it with the Nisnevich topology; the word sheaf will, unless otherwise mentioned, mean Nisnevich sheaf of on $\Sm_k$.

\subsubsection*{The norm-residue isomorphism theorem and consequences}
Kummer theory gives a canonical identification $k^*/(k^*)^n \isomt H^1_{\et}(\Spec k,\mu_n)$.  Given a sequence of elements $a_1,\ldots,a_m \in k^*$, we write $(a_i)$ for the image of $a_i \in H^1_{\et}(\Spec k,\mu_n)$, and $(a_1,\ldots,a_m)$ or $\underline{a}$ (depending on context) for the cup product $(a_1) \cup \cdots \cup (a_m) \in H^m_{\et}(\Spec k,\mu_n^{\tensor m})$.

By definition, there is an induced identification $K^M_1(k)/n \isomto k^*/(k^*)^n$.  Combining this identification with the canonical isomorphism given by Kummer theory in the previous paragraph, there is a canonical isomorphism $K^M_1(k)/n \isomt H^1_{\et}(\Spec k,\mu_n)$.  Bass and Tate showed that the previous identification extends to a morphism of graded rings
\[
\psi_{n,*}: K^M_*/n(k) \isomto H^*_{\et}(\Spec k,\mu_n^{\tensor *}),
\]
and $\psi_{n,*}$ or its graded components are referred to as the norm-residue homomorphism.

The norm residue homomorphism is functorial in $k$, but also satisfies additional compatibility conditions.  If $k$ is a field equipped with a discrete valuation $\nu$ and associated residue field $\kappa_{\nu}$, Milnor constructed in \cite[Lemma 2.1]{Milnor} residue maps
\[
\partial_{\nu}: K^M_i/n(k) \longrightarrow K^M_{i-1}/n(\kappa_{\nu}).
\]
Using cohomological purity, there are corresponding residue maps in \'etale cohomology and the norm residue homomorphism is compatible with residue maps.  Furthermore, both Milnor K-theory and \'etale cohomology admit transfers, and the norm residue homomorphism is compatible with transfers as well.  These compatibilities lead to a sheafification of the norm residue homomorphism; this circle of ideas is closely connected with the results of Bloch-Ogus (see, e.g., \cite{CTHK} for discussion of this point of view).

For a smooth irreducible $k$-scheme $V$ with function field $k(V)$, one defines the unramified Milnor K-group $\K^{M}_i/n(V)$ as the intersection of kernels of residue maps coming from the geometric discrete valuations of $k(V)$.  One defines $\K^M_i/n(U)$ on a not necessarily irreducible smooth $k$-scheme as a sum.  The functor $U \mapsto \K^M_i/n(U)$ then defines a Nisnevich sheaf $\K^M_i/n$ on $\Sm_k$ that is often called the {\em $i$-th unramified mod $n$ Milnor K-theory sheaf}.  Furthermore, $\K^M_i/n$ is actually a homotopy invariant Nisnevich sheaf with transfers and therefore is strictly $\aone$-invariant (recall that a Nisnevich sheaf of abelian groups ${\mathbf A}$ is strictly $\aone$-invariant if its cohomology presheaves $V \mapsto H^i_{Nis}(V,{\mathbf A})$ are $\aone$-invariant for any smooth $k$-scheme $V$; this notion is studied in great detail in \cite[\S 6]{MStable}).

Similarly, the Nisnevich sheafification of the functor on smooth $k$-schemes defined by $U \mapsto H^i_{\et}(U,\mu_n^{\tensor j})$ will be denoted ${\mathcal H}^i_{\et}(\mu_n^{\tensor j})$, and the sections of this sheaf over a smooth scheme $X$ can be identified by a remark of Gabber with $H^i_{ur}(X,\mu_n^{\tensor j})$; this sheaf is called the {\em $i$-th mod $n$ weight $j$ unramified \'etale cohomology sheaf}.  Again, the sheaf ${\mathcal H}^i_{\et}(\mu_n^{\tensor j})$ is strictly $\aone$-invariant.

Compatibility of the norm residue isomorphism with residues gives a morphism of sheaves
\begin{equation}
\label{eqn:normresidue}
\psi_{n,*}: \K^M_i/n \longrightarrow {\mathcal H}^i_{\et}(\mu_n^{\tensor i}).
\end{equation}
Since strictly $\aone$-invariant sheaves have Gersten resolutions (see, e.g., \cite{CTHK}), and one deduces immediately from this fact that a morphism of strictly $\aone$-invariant sheaves (with transfers) is an isomorphism if and only if it is an isomorphism on sections over finitely generated separable extensions $L/k$.  Therefore, the morphism $\psi_{n,*}$ is an isomorphism if and only if the norm residue isomorphism conjecture holds for all extensions $L/k$.

Voevodsky's proof of the Milnor conjecture and the subsequent Voevodsky-Rost proof of the Bloch-Kato conjecture imply the following result.

\begin{thm}[{\cite[Corollary 7.5]{VMilnor}, \cite[Theorem 6.1]{VModl}}]
\label{thm:normresidue}
The morphism $\psi_{n,*}$ from \textup{\ref{eqn:normresidue}} is an isomorphism of sheaves.
\end{thm}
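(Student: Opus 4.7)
The plan is essentially to combine the two ingredients that have already been isolated in the paragraphs preceding the statement: a sheaf-theoretic reduction principle on the one hand, and the field-theoretic norm residue isomorphism theorem on the other.

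First I would observe that both sides of (\ref{eqn:normresidue}) are strictly $\aone$-invariant Nisnevich sheaves with transfers on $\Sm_k$. For $\K^M_i/n$ this is asserted explicitly in the excerpt, and for ${\mathcal H}^i_{\et}(\mu_n^{\tensor i})$ it follows from the standard cohomological properties of \'etale cohomology (homotopy invariance on smooth schemes together with the existence of \'etale trace maps for finite \'etale covers). Moreover, $\psi_{n,*}$ is a morphism in this category, since its construction on sections is compatible both with Milnor's residue maps and with \'etale corestriction.

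Next I would invoke the Gersten resolution for strictly $\aone$-invariant sheaves with transfers, as recalled in the paragraph immediately preceding the theorem (with reference to \cite{CTHK}). For a sheaf $\mathbf{F}$ in this category, the Gersten resolution expresses $\mathbf{F}$ locally in terms of its values $\mathbf{F}(L)$ on function fields $L$ of smooth $k$-varieties together with the residue maps. A morphism of such sheaves therefore induces a morphism of Gersten complexes, and this morphism is an isomorphism of sheaves if and only if it induces an isomorphism on each term of the resolution, i.e., if and only if the induced maps $\mathbf{F}(L)\to \mathbf{G}(L)$ are isomorphisms for every finitely generated separable extension $L/k$.

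Finally, applying this reduction to $\psi_{n,*}$, it suffices to check that
\[
\psi_{n,i}(L): K^M_i(L)/n \longrightarrow H^i_{\et}(\Spec L,\mu_n^{\tensor i})
\]
is an isomorphism for every finitely generated separable extension $L/k$. When $n$ is a power of $2$, this is Voevodsky's affirmation of Milnor's conjecture \cite{VMilnor}, and in general it is the Bloch-Kato conjecture as established by Voevodsky and Rost \cite{VModl}. I expect no real obstacle here since the substantive content is precisely the field-level norm residue isomorphism theorem being quoted; the only thing to verify carefully is the formal package (strict $\aone$-invariance, transfers, compatibility of $\psi_{n,*}$ with residues and transfers) that legitimizes passage from the field-level statement to the sheaf-level statement.
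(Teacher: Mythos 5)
Your proposal matches the paper's argument: the paragraph preceding the theorem statement already performs exactly this reduction, using Gersten resolutions for strictly $\aone$-invariant sheaves with transfers to conclude that $\psi_{n,*}$ is an isomorphism of sheaves if and only if it is an isomorphism on sections over finitely generated separable extensions of $k$, at which point the theorem is simply the field-level norm residue isomorphism of Voevodsky and Voevodsky--Rost. Your spelled-out version of the formal package (strict $\aone$-invariance of both sheaves, compatibility of $\psi_{n,*}$ with residues and transfers) is precisely the bookkeeping the paper asserts and leaves to the reader.
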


One immediate consequence of this form of the theorem is that unramified mod $n$ Milnor K-theory can be canonically identified with unramified \'etale cohomology.  As usual, write $\Z/\ell(n)$ denote the mod $n$ motivic complex (see, e.g., \cite{MVW}).  Our choice of an $\ell$-th root of unity determines a morphism of motivic complexes $\tau: \Z/\ell(n-1) \to \Z/\ell(n)$.  Suslin and Voevodsky showed that the Beilinson-Lichtenbaum conjecture was equivalent to the Bloch-Kato conjecture \cite{SuslinVoevodskyBK}.  As a consequence, one deduces the existence of the following triangle in Voevodsky's derived category of motives.

\begin{lem}[{\cite[Lemma 2.4]{OrViVo}}]
\label{lem:distinguished}
The morphism $\tau$ extends to a distinguished triangle in $\dmeff$ of the form
\[
\Z/\ell(n-1) \stackrel{\tau}{\longrightarrow} \Z/\ell(n) \longrightarrow \underline{H}^n(\Z/\ell(n))[-n].
\]
\end{lem}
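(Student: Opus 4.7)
My approach is to construct the triangle via canonical truncation in $\dmeff$, using Theorem \ref{thm:normresidue} (equivalent to the Beilinson-Lichtenbaum conjecture) to identify the two outer terms. The natural $t$-structure on $\dmeff$, whose heart consists of strictly $\aone$-invariant Nisnevich sheaves with transfers, produces a canonical distinguished triangle
\[
\tau_{\leq n-1}\Z/\ell(n) \longrightarrow \Z/\ell(n) \longrightarrow \tau_{\geq n}\Z/\ell(n).
\]
I would first identify the third term with $\underline{H}^n(\Z/\ell(n))[-n]$. This reduces to the vanishing of the cohomology sheaves $\underline{H}^i(\Z/\ell(n))$ for $i > n$, which follows from the long exact sequence attached to the coefficient triangle $\Z(n) \stackrel{\ell}{\to} \Z(n) \to \Z/\ell(n)$ together with the Suslin-Voevodsky vanishing $\underline{H}^i(\Z(n)) = 0$ for $i > n$.

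The main step is to identify $\tau_{\leq n-1}\Z/\ell(n)$ with $\Z/\ell(n-1)$ compatibly with the multiplication-by-$\tau$ map. For this I would invoke Beilinson-Lichtenbaum in sheaf-theoretic form: the comparison morphism $\Z/\ell(j) \to R\pi_*\mu_\ell^{\tensor j}$, with $\pi: (\Sm_k)_{\et} \to (\Sm_k)_{Nis}$ the change-of-topology functor, induces an isomorphism on cohomology sheaves $\underline{H}^i$ for $i \leq j$. Combined with the étale isomorphism $\mu_\ell^{\tensor(n-1)} \isomto \mu_\ell^{\tensor n}$ produced by multiplication by the chosen $\ell$-th root of unity, this yields a zig-zag of isomorphisms
\[
\Z/\ell(n-1) \isomto \tau_{\leq n-1}R\pi_*\mu_\ell^{\tensor(n-1)} \isomto \tau_{\leq n-1}R\pi_*\mu_\ell^{\tensor n} \stackrel{\sim}{\longleftarrow} \tau_{\leq n-1}\Z/\ell(n)
\]
in $\dmeff$ (the leftmost arrow uses that $\Z/\ell(n-1)$ already has cohomology concentrated in degrees $\leq n-1$). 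Compatibility of the comparison morphism with cup products then guarantees that the resulting composite $\Z/\ell(n-1) \to \Z/\ell(n)$ is precisely the map $\tau$ fixed at the start of the section.

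The only subtle point is tracking the root-of-unity twist through the identifications so as to recognize the first morphism of the triangle as the given $\tau$; once this bookkeeping is handled, the triangle is obtained by purely formal manipulations with the truncation functors.
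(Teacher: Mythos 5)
Your proposal is correct and follows essentially the route that the cited reference \cite[Lemma 2.4]{OrViVo} takes: truncate $\Z/\ell(n)$ with respect to the homotopy $t$-structure on $\dmeff$, identify $\tau_{\geq n}\Z/\ell(n)$ with $\underline{H}^n(\Z/\ell(n))[-n]$ using vanishing of $\underline{H}^i(\Z(n))$ for $i>n$, and identify $\tau_{\leq n-1}\Z/\ell(n)$ with $\Z/\ell(n-1)$ via the Beilinson--Lichtenbaum isomorphism and the twist $\mu_\ell^{\tensor(n-1)}\isomt\mu_\ell^{\tensor n}$. One small streamlining that disposes of the ``bookkeeping'' concern at the end: rather than building a zig-zag, note that since $\Z/\ell(n-1)$ already lies in degrees $\leq n-1$, the morphism $\tau$ itself factors uniquely through $\tau_{\leq n-1}\Z/\ell(n)\to\Z/\ell(n)$, and the induced map $\Z/\ell(n-1)\to\tau_{\leq n-1}\Z/\ell(n)$ is checked to be an isomorphism on cohomology sheaves in degrees $\leq n-1$ by exactly the Beilinson--Lichtenbaum comparison you invoke; this exhibits the first arrow of the triangle as $\tau$ by construction, with no further compatibility to verify.
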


The sheaf $\underline{H}^n(\Z/\ell(n))$ is strictly $\aone$-invariant and is therefore determined by its sections over fields.  The computation of the $(n,n)$-motivic cohomology of fields (see, e.g., \cite[\S 4]{MVW}) together with Theorem \ref{thm:normresidue} allows us to identify $\underline{H}^n(\Z/\ell(n)) \isomt {\mathcal H}^n_{\et}(\mu_{\ell}^{\tensor n})$.

If $X$ is any smooth $k$-scheme, the \u Cech object associated with $X$, denoted $\breve{C}(X)$ is the simplicial scheme whose $n$-th term is the $n+1$-fold fiber product of $X$ with itself over $\Spec k$; the relevant simplicial structures are induced by diagonal maps and partial projections.  The canonical morphism $\breve{C}(X) \to \Spec k$ is an isomorphism in the (Nisnevich) simplicial homotopy category if it is an epimorphism of Nisnevich sheaves \cite[\S 2 Lemma 1.15]{MV}, i.e., if $X$ has a $k$-rational point.  Whether or not $X$ has a $k$-rational point, the map $\breve{C}(X) \to \Spec k$ induces an isomorphism $H^{n,n}(\Spec k,\Z/\ell) \isomt H^{n,n}(\breve{C}(X),\Z/\ell)$ (see, e.g., \cite[Lemma 2.2]{OrViVo}).

The diagonal map $X \to \breve{C}(X)$ induces a morphism
\[
H^0(\breve{C}(X),{\mathcal H}^n_{\et}(\mu_{\ell}^{\tensor n})) \longrightarrow H^0(X,{\mathcal H}^n_{\et}(\mu_{\ell}^{\tensor n})) = H^n_{ur}(X/k,\mu_{\ell}^{\tensor n}).
\]
Applying the functor ${\bf M}(\breve{C}(X),-)$ to the triangle in Lemma \ref{lem:distinguished}, one can deduce the following result.

\begin{lem}[{\cite[Proof of Lemma 6.5]{VModl}}]
\label{lem:kernelofunramified}
There is an exact sequence of the form
\[
H^{n,n-1}(\breve{C}(X),\Z/\ell) \longrightarrow H^{n}_{\et}(\Spec k,\Z/\ell) \longrightarrow H^n_{ur}(X/k,\mu_{\ell}^{\tensor n}).
\]
\end{lem}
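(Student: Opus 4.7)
The plan is to extract the sequence from the long exact sequence obtained by applying the functor $\hom_{\dmeff}(M(\breve{C}(X)),-)$ to the distinguished triangle of Lemma \ref{lem:distinguished}, and then identify each of the three relevant terms using results already recorded in the excerpt. Explicitly, I would take $\hom_{\dmeff}(M(\breve{C}(X)), -[n])$ of the triangle
\[
\Z/\ell(n-1)\stackrel{\tau}{\longrightarrow}\Z/\ell(n)\longrightarrow \underline{H}^n(\Z/\ell(n))[-n]
\]
and read off the resulting three-term piece
\[
H^{n,n-1}(\breve{C}(X),\Z/\ell)\longrightarrow H^{n,n}(\breve{C}(X),\Z/\ell)\longrightarrow H^0(\breve{C}(X),\underline{H}^n(\Z/\ell(n))),
\]
which is exact at the middle term by general nonsense for distinguished triangles in $\dmeff$.

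Next I would perform the identifications needed to match this with the stated sequence. For the middle term, the excerpt already notes that the structure morphism $\breve{C}(X)\to\Spec k$ induces an isomorphism $H^{n,n}(\Spec k,\Z/\ell)\isomto H^{n,n}(\breve{C}(X),\Z/\ell)$, and the standard computation of the motivic cohomology of a field combined with our choice of $\tau:\Z/\ell\cong\mu_\ell$ identifies $H^{n,n}(\Spec k,\Z/\ell)$ with $H^n_{\et}(\Spec k,\Z/\ell)$. For the third term, the remark immediately following Lemma \ref{lem:distinguished} uses Theorem \ref{thm:normresidue} to identify $\underline{H}^n(\Z/\ell(n))$ with the strictly $\aone$-invariant sheaf ${\mathcal H}^n_{\et}(\mu_\ell^{\tensor n})$, so that $\hom_{\dmeff}(M(\breve{C}(X)),\underline{H}^n(\Z/\ell(n)))=H^0(\breve{C}(X),{\mathcal H}^n_{\et}(\mu_\ell^{\tensor n}))$.

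Finally, I would post-compose the rightmost map with the natural morphism
\[
H^0(\breve{C}(X),{\mathcal H}^n_{\et}(\mu_\ell^{\tensor n}))\longrightarrow H^0(X,{\mathcal H}^n_{\et}(\mu_\ell^{\tensor n}))=H^n_{ur}(X/k,\mu_\ell^{\tensor n})
\]
coming from the diagonal $X\to\breve{C}(X)$, invoking Gabber's remark (recorded earlier in the paper) for the last equality. The key point is that this morphism is \emph{injective}: for any Nisnevich sheaf ${\mathcal F}$, $H^0(\breve{C}(X),{\mathcal F})$ is the equalizer of the two maps ${\mathcal F}(X)\rightrightarrows{\mathcal F}(X\times_k X)$, hence a subgroup of ${\mathcal F}(X)$. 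Since the last arrow in our three-term sequence is now composed with an injection, its kernel is unchanged and exactness at the middle term is preserved, yielding the desired sequence.

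The step I expect to require the most care is the bookkeeping behind the identification $\hom_{\dmeff}(M(\breve{C}(X)),\underline{H}^n(\Z/\ell(n)))=H^0(\breve{C}(X),{\mathcal H}^n_{\et}(\mu_\ell^{\tensor n}))$: one must know that, for a strictly $\aone$-invariant sheaf viewed as an object of $\dmeff$ concentrated in degree zero, morphisms from $M(\breve{C}(X))$ compute Nisnevich (hyper)cohomology, and that the connecting map from the triangle corresponds under this identification to the norm-residue based map arising from the Beilinson--Lichtenbaum description of $\Z/\ell(n)$. Once these compatibilities are in hand, everything else is a direct reading-off of the long exact sequence.
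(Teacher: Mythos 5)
Your proof is correct and follows essentially the same route the paper sketches: apply $\hom_{\dmeff}(M(\breve{C}(X)),-)$ to the triangle of Lemma~\ref{lem:distinguished}, identify the middle term via the isomorphism $H^{n,n}(\Spec k,\Z/\ell)\isomto H^{n,n}(\breve C(X),\Z/\ell)$ together with the norm-residue identification, identify the third term via $\underline{H}^n(\Z/\ell(n))\cong{\mathcal H}^n_{\et}(\mu_\ell^{\tensor n})$, and post-compose with the map induced by the diagonal $X\to\breve C(X)$. You correctly supply the one justification the paper leaves tacit — that $H^0(\breve C(X),{\mathcal F})\hookrightarrow{\mathcal F}(X)$ is injective (being an equalizer), so post-composition with it preserves exactness at the middle term — which is exactly what is needed to pass from the sequence ending in $H^0(\breve C(X),{\mathcal H}^n_{\et}(\mu_\ell^{\tensor n}))$ to the one ending in $H^n_{ur}(X/k,\mu_\ell^{\tensor n})$.
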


\subsubsection*{Small Pfister quadrics and the results of Orlov-Vishik-Voevodsky}
We use standard notation from the theory of quadratic forms.  For elements $a_1,\ldots,a_m \in k^*$ write $\langle a_1 \rangle$ for the $1$-dimensional quadratic form $a_1 x^2$, $\langle a_1,\ldots,a_m \rangle$ for the orthogonal sum $\langle a_1 \rangle \oplus \cdots \oplus \langle a_m \rangle$, $\langle \langle a_1 \rangle \rangle$ for the form $\langle 1 , -a \rangle$ and $\langle \langle a_1,\ldots,a_m \rangle \rangle$ for the tensor product $\langle \langle a_1 \rangle \rangle \tensor \cdots \tensor \langle \langle a_m \rangle \rangle$.

\begin{defn}
\label{defn:pfisterforms}
Given an $n$-tuple $a_1,\ldots,a_n \in k^*$, the {\em small Pfister quadric} attached to the symbol $\underline{a}$ is the smooth projective variety $Q_{\underline{a}}$ defined by the homogeneous equation
\[
\langle \langle a_1,\ldots,a_{n-1} \rangle \rangle = \langle a_n \rangle.
\]
We write $\mathcal{X}_{\underline{a}}$ for $\breve C(Q_{\underline{a}})$.
\end{defn}

Orlov-Vishik-Voevodsky give a strengthening of Lemma \ref{lem:kernelofunramified}, which we summarize in the following statement (this form of the result, which was perhaps contained in a preprint version of \cite{OrViVo}, has apparently been relegated to the status of mathematical folklore).

\begin{thm}[{\cite[Theorem 2.1]{OrViVo}}]
\label{thm:ovv}
If $\underline{a}$ is a non-trivial symbol of length $n$, the restriction map
\[
\eta^i: H^{i}_{\et}(\Spec k,\Z/2) {\longrightarrow} H^i_{ur}(Q_{\underline{a}}/k,\mu_2^{\tensor i})
\]
is an isomorphism for $i < n$ and has kernel $\Z/2$ generated by the class of $\underline{a}$ if $i = n$.
\end{thm}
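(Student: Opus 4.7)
The plan is to apply Lemma \ref{lem:kernelofunramified} with $X = Q_{\underline{a}}$ and $\ell = 2$ in each degree $i \leq n$, which identifies $\ker(\eta^i)$ with the image of the $\tau$-multiplication map
\[
H^{i,i-1}(\mathcal{X}_{\underline{a}},\Z/2) \longrightarrow H^{i,i}(\mathcal{X}_{\underline{a}},\Z/2) = H^i_{\et}(\Spec k,\Z/2),
\]
where the final identification combines the isomorphism $H^{i,i}(\Spec k,\Z/2) \isomt H^{i,i}(\mathcal{X}_{\underline{a}},\Z/2)$ recorded in the excerpt with Theorem \ref{thm:normresidue}. An easy preliminary check is that for $i = n$ the symbol $\underline{a}$ itself lies in $\ker(\eta^n)$: by construction of $Q_{\underline{a}}$ the form $\langle\langle a_1,\ldots,a_{n-1}\rangle\rangle$ generically represents $a_n$ on $Q_{\underline{a}}$, and a standard Milnor K-theory manipulation combined with Theorem \ref{thm:normresidue} forces $(a_1,\ldots,a_n)$ to vanish in $H^n_{\et}(k(Q_{\underline{a}}),\mu_2^{\tensor n})$, hence a fortiori in $H^n_{ur}(Q_{\underline{a}}/k,\mu_2^{\tensor n})$.

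The heart of the argument is then a computation of the low-weight motivic cohomology of $\mathcal{X}_{\underline{a}}$: one must show that the $\tau$-multiplication $H^{i,i-1}(\mathcal{X}_{\underline{a}},\Z/2) \to H^{i,i}(\mathcal{X}_{\underline{a}},\Z/2)$ has image zero for $i < n$ and image exactly $\Z/2\cdot\underline{a}$ for $i = n$. This is essentially the content of Voevodsky's work on the Milnor conjecture: it is carried out through the construction of the Rost motive of the Pfister quadric, the identification of its splitting pattern, and a delicate application of motivic Steenrod operations. This step is where the real difficulty of the theorem lies, and in practice I would invoke it from \cite{VMilnor} or from the original \cite{OrViVo} rather than attempt to reproduce the argument.

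For the surjectivity assertion in degrees $i < n$, the three-term sequence of Lemma \ref{lem:kernelofunramified} is insufficient; one must extend it one step to the right via the long exact sequence obtained by applying $\mathbf{M}(\mathcal{X}_{\underline{a}},-)$ to the full distinguished triangle of Lemma \ref{lem:distinguished}. Surjectivity of $\eta^i$ then reduces to two further ingredients: injectivity of the $\tau$-multiplication $H^{i+1,i-1}(\mathcal{X}_{\underline{a}},\Z/2) \to H^{i+1,i}(\mathcal{X}_{\underline{a}},\Z/2)$, which is part of the same Voevodsky computation, and surjectivity of the comparison $H^0(\mathcal{X}_{\underline{a}},\mathcal{H}^i_{\et}(\mu_2^{\tensor i})) \to H^i_{ur}(Q_{\underline{a}}/k,\mu_2^{\tensor i})$, which one can handle by combining the Gersten resolution of the strictly $\aone$-invariant sheaf $\mathcal{H}^i_{\et}(\mu_2^{\tensor i})$ with the standard fact from quadratic-form theory that $H^i_{\et}(k,\Z/2) \to H^i_{\et}(k(Q_{\underline{a}}),\Z/2)$ is injective for $i < n$ over small Pfister quadrics.
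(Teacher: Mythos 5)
Your skeleton matches the paper's: both pass through Lemma~\ref{lem:kernelofunramified} (equivalently, the long exact sequence gotten by applying $\mathbf{M}(\mathcal{X}_{\underline{a}},-)$ to the triangle of Lemma~\ref{lem:distinguished}), identify $H^{i,i}(\mathcal{X}_{\underline{a}},\Z/2)$ with $H^i_{\et}(k,\Z/2)$, and black-box the $\tau$-multiplication computation from Voevodsky and Orlov--Vishik--Voevodsky. The preliminary check that $\underline{a}$ dies in $H^n_{\et}(k(Q_{\underline{a}}))$ is fine, and your reduction of surjectivity for $i<n$ to two sub-claims (injectivity of the next $\tau$-step, and surjectivity of the comparison $H^0(\mathcal{X}_{\underline{a}},\mathcal{H}^i_{\et}(\mu_2^{\tensor i}))\to H^i_{ur}(Q_{\underline{a}}/k,\mu_2^{\tensor i})$) is the correct decomposition.

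The gap is in the second sub-claim. The map $H^0(\mathcal{X}_{\underline{a}},\mathcal{H}^i_{\et}(\mu_2^{\tensor i}))\to H^0(Q_{\underline{a}},\mathcal{H}^i_{\et}(\mu_2^{\tensor i}))$, being induced by the diagonal into a \u Cech object, identifies the source with the equalizer of $p_1^*,p_2^*$ on $H^0(Q_{\underline{a}},-)$; proving surjectivity means proving that every unramified class on $Q_{\underline{a}}$ satisfies this descent condition. Combining ``the Gersten resolution'' with ``injectivity of $H^i_{\et}(k)\to H^i_{\et}(k(Q_{\underline{a}}))$ for $i<n$'' does not by itself produce that: the Gersten complex only re-expresses what an unramified class is, and the OVV injectivity is information about the \emph{other} direction. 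The geometric input you are missing is that $Q_{\underline{a}}$ is rational over its own function field (any quadric with a rational point is rational), so that $H^i_{ur}\bigl(Q_{\underline{a}}\times_k k(Q_{\underline{a}})\,/\,k(Q_{\underline{a}})\bigr)\cong H^i_{\et}(k(Q_{\underline{a}}))$; that is what forces $p_1^*\alpha=p_2^*\alpha$ after restriction to the generic fiber and hence the descent. This is precisely the ``generically cellular/rational'' mechanism of Kahn (\cite[Theorem A.1]{Kahngenericallycellular}) and Kahn--Sujatha that the paper cites for the cokernel computation. Without it, your proof of surjectivity is not complete.
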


\begin{proof}[Remarks on the proof.]
Any quadric that has a $k$-rational point is in fact $k$-rational.  The quadrics $Q_{\underline{a}}$ are therefore {\em generically rational} in the sense that they are rational over their generic point.  From Lemma \ref{lem:distinguished}, and the argument of \cite[Theorem A.1]{Kahngenericallycellular} we get a description of both the kernel and cokernel of the restriction map in unramified cohomology.

The description of the kernel (and triviality when $i < n$) is an immediate consequence of \cite[Theorem 2.1]{OrViVo}.  The vanishing of the cokernel is proven by means of the same techniques.  See, e.g., \cite[\S 3 Proof of Theorem 3]{KahnSujatha} for a detailed explanation of the triviality of the cokernel.
\end{proof}

\subsubsection*{Constructing Rost varieties}
We now discuss partial analogs of these constructions adapted to primes other than $2$.  We start by outlining a construction given in \cite[\S 2]{SuslinJoukhovitski}, which is due originally to Voevodsky.  Start with any smooth projective $k$-variety $Y$.  Write $Y^m$ for the $m$-fold fiber product of $Y$ with itself over $m$.  Let $\Sym^m(Y)$ be the $m$-th symmetric product of $Y$, i.e., the (singular in general) quotient $Y^m/S_m$, where the symmetric group $S_m$ acts by permuting the factors. The symmetric group $S_m$ acts freely on the open subscheme $Y^m \setminus \Delta$ where $\Delta$ is the union of the diagonals, which is the locus corresponding to $m$ distinct points in $Y$.  A geometric quotient $C_m(Y) := (Y^m \setminus \Delta)/S_m$ exists as a smooth scheme, and the notation is chosen to indicate that $C_m(Y)$ is the configuration space of $m$ points on $Y$.

There is an addition map $p: Y \times \Sym^{m-1}Y \to \Sym^m(Y)$; this morphism is finite surjective of degree $m$.  The preimage of $C_{m}(Y)$ under $p$ gives rise to a morphism
\[
p^{-1}(C_{m}(Y)) \subset Y \times S^{m-1}(Y) \longrightarrow C_m(Y)
\]
that is a finite \'etale morphism of degree $m$.  In particular, $p^{-1}(C_{m}(Y))$ is itself a smooth scheme.

The sheaf ${\mathcal A} := p_*(\O_{Y \times \Sym^{m-1}(Y)})|_{C_{m}(Y)}$ is a locally free sheaf of $\O_{C_{m}(Y)}$-algebras of rank $m$.  We let ${\mathbb V}({\mathcal A}) \to C_{m}(Y)$ be the associated geometric vector bundle (given by taking the spectrum of the symmetric algebra of the dual of ${\mathcal A}$).  Since ${\mathcal A}$ is a locally free sheaf of algebras, there is a well-defined norm morphism
\[
N: \mathcal{A} \longrightarrow \O_{C_{m}(Y)},
\]
which can be identified as a section of $\Sym^m{\mathcal A}^{\vee}$.

Now, take an element $a \in k^*$ and view this as a section of $\O_{C_{m}(Y)}$.  Then, consider the closed subscheme of $W \subset {\mathcal A}$ defined by the equation $N - a = 0$.  By \cite[Lemma 2.1]{SuslinJoukhovitski}, the variety $W$ is smooth over $C_{m}(Y)$ and therefore smooth over $k$ (and also geometrically irreducible).  Let $N(Y,a,m)$ be a non-singular compactification of $W$, which exists since $k$ has characteristic $0$; by resolving indeterminacy, we can even assume $N(Y,a,m)$ is proper and surjective over a non-singular compactification of $C_m(Y)$.  Note that $\dim_k N(Y,a,m) = m\dim Y+ m - 1$.

\begin{prop}
\label{prop:rostvarietiesrcprop}
If $k$ is any algebraically closed field containing $\cplx$, and $Y$ is smooth projective unirational (resp. rationally connected) variety, then $N(Y,a,m)$ is also smooth projective unirational (resp. rationally connected) variety.
\end{prop}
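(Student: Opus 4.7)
The strategy is to trivialize the étale algebra $\mathcal{A}$ on an $S_m$-Galois cover of $C_m(Y)$, identify the pulled-back norm variety with an explicit rational bundle over $Y^m$, and then transfer unirationality (resp. rational connectedness) down to $N(Y,a,m)$ via a dominant rational map.

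First I would consider the finite étale quotient $q: Y^m \setminus \Delta \to C_m(Y)$ (free $S_m$-action), of degree $m!$, and establish the canonical splitting $q^*\mathcal{A} \cong \mathcal{O}_{Y^m \setminus \Delta}^m$ as a split étale algebra. Since $\Spec \mathcal{A} = p^{-1}(C_m(Y))$, ordering the configuration provides $m$ canonical sheets, the $i$-th being the image of $(z_1, \ldots, z_m) \mapsto (z_i, \{z_1, \ldots, \widehat{z_i}, \ldots, z_m\})$ inside $Y \times \Sym^{m-1}(Y)$. The norm morphism $N: \mathcal{A} \to \mathcal{O}$ then pulls back to the product-of-coordinates map $\mathcal{O}^m \to \mathcal{O}$, so the base change $W' := W \times_{C_m(Y)} (Y^m \setminus \Delta)$ is canonically identified with $\{x_1 \cdots x_m = a\} \times (Y^m \setminus \Delta) \cong \gm^{m-1} \times (Y^m \setminus \Delta)$, a dense open of the smooth projective variety $\mathbb{P}^{m-1} \times Y^m$.

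Next, since unirationality and rational connectedness are stable under products of smooth projective varieties over an algebraically closed field of characteristic zero, $Y^m$, and hence $\mathbb{P}^{m-1} \times Y^m$, is unirational (resp. rationally connected) whenever $Y$ is. The projection $W' \to W$ is finite étale of degree $m!$, and in particular dominant; it therefore extends to a dominant rational map $\mathbb{P}^{m-1} \times Y^m \dashrightarrow N(Y,a,m)$. Unirationality of the target follows at once by precomposing with a dominant rational map $\mathbb{P}^N \dashrightarrow \mathbb{P}^{m-1} \times Y^m$. For rational connectedness, I would invoke the Kollár-Miyaoka-Mori birational invariance of rational connectedness (in characteristic zero) to pass freely between smooth projective compactifications, together with the standard fact that the image of a rationally connected smooth projective variety under a dominant rational map is rationally connected.

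The main obstacle is the structural identification at the start: verifying that $q^*\mathcal{A}$ splits canonically as the trivial product algebra and that the norm morphism pulls back to the coordinate product. Once this is in hand, the remainder of the argument is a routine application of standard stability properties of unirationality and rational connectedness under products and dominant rational maps of smooth projective varieties.
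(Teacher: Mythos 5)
Your proposal is correct and takes essentially the same approach as the paper: pull $W$ back along the $S_m$-torsor $Y^m\setminus\Delta \to C_m(Y)$ to trivialize the \'etale algebra, identify the pullback with $(Y^m\setminus\Delta)\times E_a$ where $E_a = \{z_1\cdots z_m = a\}$, and descend unirationality/rational connectedness along the resulting dominant rational map. Your account is a touch more explicit about the canonical splitting of $q^*\mathcal{A}$ and correctly gives the degree as $m!$ (the paper's text says $m$, which appears to be a slip), but the argument is the same.
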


\begin{proof}
If $Y$ is any unirational (resp. rationally connected) smooth proper variety, then $Y^m$ is again unirational (resp. rationally connected).  Let $E_{a}$ be the subvariety of ${\mathbb A}^m$ defined by the equation $\prod_i z_i = a$, which is itself rational; this equation is precisely $N = a$ for a split \'etale algebra of dimension $m$.  Note that there is an obvious action of the symmetric group $S_m$ on $E_a$ by permuting the coordinates in ${\mathbb A}^m$.

The morphism $Y^m \setminus \Delta \to C_m(Y)$ is an $S_m$-torsor: this morphism sends a sequence of distinct points in $Y^m \setminus \Delta$ to the corresponding $0$-cycle.  The pullback of the \'etale $\O_{C_m(Y)}$-algebra ${\mathcal A}$ along the morphism $Y^m \setminus \Delta \to C_m(Y)$, is a split \'etale $\O_{Y^m \setminus \Delta}$-algebra by construction.  Consequently, the pullback of $W$ along the morphism $Y^m \setminus \Delta \to C_m(Y)$ is precisely $Y^m \setminus \Delta \times E_a$.  Moreover, the group $S_m$ acts diagonally on the product $Y^m \setminus \Delta \times E_a$, and the quotient is precisely $W$.  In particular, the quotient map gives a dominant morphism $Y^m \setminus \Delta \times E_a \to W$ that has degree $m$ (see \cite[5c-d]{KarpenkoMerkurjev} for a more general version of this construction).

Since $E_a$ is rational, any smooth proper compactification $\overline{E_a}$ is again rational.  By resolving indeterminacy of the rational map $Y^m \times \overline{E}_a \to N(Y,a,m)$, we obtain a dominant morphism from a unirational (resp. rationally connected) variety to $N(Y,a,m)$, which means that $N(Y,a,m)$ is unirational (resp. rationally connected) as well.
\end{proof}

\begin{cor}
\label{cor:rostvarietiesarerationallyconnected}
Suppose $k$ is the function field of a smooth complex rationally connected variety, $a \in k^*$, $Y$ is a smooth projective $k$-variety, and $K = k(N(Y,a,m))$.  If $X$ is any smooth proper model of $K/\cplx$, then $X$ is rationally connected.
\end{cor}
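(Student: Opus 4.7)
The plan is to realize $X$ as $\cplx$-birational to the total space of a smooth proper fibration over a model of $k$, whose generic fibre is $N(Y,a,m)$, and then to invoke the theorem of Graber--Harris--Starr that a proper morphism over a rationally connected base whose general fibre is rationally connected has rationally connected total space.

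First I would spread out the data. Let $S$ be a smooth projective rationally connected complex variety with $\cplx(S) = k$. By standard spreading out, after shrinking to a dense open $U \subset S$, one obtains a smooth projective morphism $\pi \colon \mathcal{Y} \to U$ with geometrically integral fibres and a regular unit $\tilde{a} \in \Gamma(U, \O_U^{\times})$ such that the generic fibre of $\pi$ is $Y$ and $\tilde{a}_\eta = a$. The construction preceding Proposition~\ref{prop:rostvarietiesrcprop}---the relative symmetric product, the relative configuration space $C_m(\mathcal{Y}/U)$, the locally free algebra $\mathcal{A}$ and its norm, and the subscheme $\mathcal{W}$ cut out of $\mathbb{V}(\mathcal{A})$ by $N - \tilde{a} = 0$---is compatible with flat base change, and so makes sense over $U$. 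This produces a smooth morphism $\mathcal{W} \to U$ whose generic fibre is $W$. Using resolution of singularities in characteristic zero, I would then produce a smooth projective model $\overline{\mathcal{N}} \to S$ (after possibly shrinking $U$ and extending over $S \setminus U$) whose generic fibre is $N(Y,a,m)$, so that $K = \cplx(\overline{\mathcal{N}})$ and $X$ is $\cplx$-birational to $\overline{\mathcal{N}}$.

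Next I would handle the fibres. In the case of interest for the applications, $Y$ is rationally connected over $k$; since geometric rational connectivity of the generic fibre of $\pi$ spreads to a general closed fibre (Koll\'ar, \emph{Rational Curves on Algebraic Varieties}, IV.3.11), after further shrinking $U$ every closed fibre $\mathcal{Y}_s$ is a smooth projective rationally connected complex variety. Proposition~\ref{prop:rostvarietiesrcprop} applied to $(\mathcal{Y}_s, \tilde{a}(s), m)$ then gives that the general fibre $\overline{\mathcal{N}}_s$ of $\overline{\mathcal{N}} \to S$ is rationally connected. The Graber--Harris--Starr theorem now forces $\overline{\mathcal{N}}$ itself to be rationally connected, and since $X$ is a smooth proper complex variety birational to $\overline{\mathcal{N}}$, and rational connectivity is a birational invariant among smooth proper complex varieties, it passes to $X$.

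The hardest part will be the spreading-out and compactification step: one must verify that the entire relative Rost construction goes through over a dense open $U \subset S$ and that it admits a smooth projective compactification over all of $S$. Once this geometric bookkeeping is in place, the fibrewise application of Proposition~\ref{prop:rostvarietiesrcprop} and the invocation of Graber--Harris--Starr are immediate.
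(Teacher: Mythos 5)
Your proof follows the same strategy as the paper's: spread $N(Y,a,m)$ out to a smooth proper family over a dense open subset of a smooth projective model $Z$ of $k/\cplx$, observe via Proposition~\ref{prop:rostvarietiesrcprop} (and Koll\'ar IV.3.11) that the closed fibres are rationally connected, and then invoke the Graber--Harris--Starr theorem over the rationally connected base. You are also right to flag that the argument really uses that the geometric fibres of $Y$ are rationally connected --- a hypothesis the corollary's statement omits but which is implicit in its intended application to the inductively constructed Rost varieties $Y_{\underline{a}}$.
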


\begin{proof}
Suppose $Z$ is any smooth projective variety with $\cplx(Z) = k$.  By assumption $Z$ is rationally connected.  By construction, there is a dominant rational map from $X$ to $Z$, so by resolving indeterminacy, there exist a smooth proper variety $X'$, a morphism $X' \to X$ that is a birational equivalence, and a projective morphism $X' \to Z$.  Since $Z$ is rationally connected it suffices by \cite[Corollary 1.3]{GHS} to establish that the general fibers of $X' \to Z$ are rationally connected.

By clearing the denominators, the $k$-variety $N(Y,a,m)$ gives rise to a variety $Y'$ and a smooth and proper morphism $Y' \to U$ for an open subscheme $U \subset Z$.  Tracing through the constructions, one observes that the fiber over any closed point $u \in U$ of $Y' \to U$ is a smooth compactification of $N(Y_u,a_u,m)$ where $a_u$ is the specialization of $a$ to $u$.  Thus, by Proposition \ref{prop:rostvarietiesrcprop}, over any closed point $u \in U$ the fiber of $Y' \to U$ is rationally connected, so by \cite[Theorem IV.3.11]{Kollar} every fiber of this morphism is rationally connected.  It follows immediately that the general fiber of $X' \to Z$ is rationally connected as well.  Therefore, $X$ is rationally connected.
\end{proof}

\subsubsection*{Rost varieties and their motivic cohomology}
Now, suppose $\ell$ is a prime number.  Given elements $a_1,\ldots,a_m \in k^*$, we can also consider the symbol $\underline{a} \in K^M_m(k)/\ell$.  An extension $L/k$ is called a {\em splitting field} for $\underline{a}$ if $\underline{a}$ becomes trivial in $K^M_m(L)/\ell$.  An irreducible smooth $k$-variety $X$ is a {\em splitting variety} for a symbol $\underline{a}$ modulo $\ell$ if $k(X)$ is a splitting field for $\underline{a}$.  An irreducible smooth $k$-variety $X$ is called a {\em generic splitting variety} for $\underline{a}$ modulo $\ell$ if $X$ is a splitting variety, and for any splitting field $L$ of $\underline{a}$, $X(L)$ is non-empty.  Unfortunately, generic splitting varieties are only known to exist for $\ell = 2$ (Pfister quadrics) or $n \leq 3$ (when $n = 2$, the Brauer-Severi varieties attached to cyclic algebras provide models, and when $n = 3$, the Merkurjev-Suslin varieties provide models).

An irreducible smooth $k$-variety $X$ is an {\em $\ell$-generic splitting variety} for a non-zero symbol $\underline{a} \in \K^M_m(k)/\ell$ if $X$ is a splitting variety for $\underline{a}$, and for any splitting field $L$ of $\underline{a}$, there is a finite extension $L'/L$ of degree prime to $\ell$ such that $X(L')$ is non-empty.  An $\ell$-generic splitting variety for a non-zero symbol $\underline{a} \in \K^M_m(k)/\ell$ is a {\em Rost variety} for the symbol $\underline{a}$ if it satisfies the conditions of \cite[Definition 0.5]{HaesemeyerWeibel}

For $\ell \neq 2$, Rost varieties for a symbol of length $m$ can be constructed inductively assuming the Bloch-Kato conjecture in weights $< m$.  For the symbol $\{a_1,a_2\}$, the Severi-Brauer variety associated with the cyclic algebra attached to $\{a_1,a_2\}$ is even a generic splitting variety.  Given an arbitrary symbol $\underline{a} = \{a_1,\ldots,a_m\}$, one defines inductively $Y_{\underline{a}} = Y_{(a_1,\ldots,a_n)} := N(Y_{a_1,\ldots,a_{n-1}},a_n,l)$.  Note that $\dim_k Y_{\underline{a}} = \ell^{n-1} - 1$.

\begin{thm}[Voevodsky, Rost ({\cite[Theorem 1.21]{SuslinJoukhovitski}})]
\label{thm:rostvarietiesexist}
If $\ell$ is a prime number, and $\underline{a} \in K^M_n(k)/\ell$ is a non-trivial symbol, the variety $Y_{\underline{a}}$ is a Rost variety for the symbol $\underline{a}$.
\end{thm}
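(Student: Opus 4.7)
The plan is induction on the length $n$ of the non-trivial symbol $\underline{a} = \{a_1,\ldots,a_n\}$. The base case $n=2$ is handled by the Severi--Brauer variety attached to the cyclic algebra $(a_1,a_2)_\ell$: this is a genuine generic splitting variety of dimension $\ell-1$, and its motivic decomposition (well understood since Rost) verifies all conditions of \cite[Definition 0.5]{HaesemeyerWeibel}. For the inductive step, set $Y := Y_{(a_1,\ldots,a_{n-1})}$ and $Y_{\underline{a}} := N(Y,a_n,\ell)$, and verify in turn the splitting property, the $\ell$-genericity, and Rost's additional conditions.

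For the splitting property, note that over $F := k(C_\ell(Y))$ the construction of $N(Y,a_n,\ell)$ produces, by design, a degree-$\ell$ \'etale $F$-algebra $E$ (namely $p_*\O$ pulled back along the generic point of the configuration space) together with an element of $E$ whose norm equals $a_n$. Since $E$ contains $k(Y)$, the inductive hypothesis gives $\{a_1,\ldots,a_{n-1}\} = 0$ in $K^M_{n-1}(E)/\ell$. The projection formula for the norm map $N_{E/F}\colon K^M_{\ast}(E)/\ell \to K^M_{\ast}(F)/\ell$ then forces $\{a_1,\ldots,a_n\} = 0$ in $K^M_n(F)/\ell$, and hence in $K^M_n(k(Y_{\underline{a}}))/\ell$.

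For $\ell$-genericity, let $L$ be any splitting field of $\underline{a}$. Using the Bloch--Kato theorem in weight $n-1$ (which is available since our construction is inductive), the inductive hypothesis supplies a prime-to-$\ell$ extension $L_1/L$ on which $\{a_1,\ldots,a_{n-1}\}$ already vanishes and $Y(L_1)\neq\emptyset$. At such an $L_1$-point the \'etale algebra $\mathcal{A}$ becomes a split degree-$\ell$ algebra, so finding an $L_1$-point of $Y_{\underline{a}}$ reduces to expressing $a_n$ as a norm up to a further prime-to-$\ell$ extension; this is precisely what the hypothesis $\{a_1,\ldots,a_n\}=0$ delivers via the lower-weight norm residue isomorphism.

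The final step is to verify Rost's additional conditions of \cite[Definition 0.5]{HaesemeyerWeibel}. Dimension count is routine: the recursion $\dim Y_{\underline{a}} = \ell \dim Y + (\ell-1)$ together with $\dim Y = \ell^{n-2}-1$ gives $\dim Y_{\underline{a}} = \ell^{n-1}-1$. The cohomological conditions --- existence of the summand producing a generalized Rost motive and the requisite vanishing/nondivisibility statements on Chow groups in top codimension --- are proved by feeding the inductively obtained motivic decomposition of $Y$ into the motivic analysis of the norm variety construction carried out in \cite[\S 2--3]{SuslinJoukhovitski}. This motivic bookkeeping, which requires Rost nilpotence and a careful understanding of how the symmetric product and norm equation interact with the Rost motive of $Y$, is the main technical obstacle; the splitting and $\ell$-genericity steps above are essentially formal consequences of the construction together with lower-weight Bloch--Kato.
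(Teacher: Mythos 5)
The paper does not prove Theorem~\ref{thm:rostvarietiesexist}; it cites \cite[Theorem 1.21]{SuslinJoukhovitski} and attributes the result to Voevodsky and Rost, so there is no in-paper argument to compare against. Assessing your sketch on its own merits: the architecture (induction on symbol length, Severi--Brauer base case, norm variety $N(Y,a_n,\ell)$ for the inductive step) is the right shape, but there are two real problems.

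First, the splitting-property argument is placed over the wrong field. You assert that over $F := k(C_\ell(Y))$ the construction produces ``an element of $E$ whose norm equals $a_n$.'' That is false: the tautological element of norm $a_n$ exists only over $k(W) = k(N(Y,a_n,\ell))$, the function field of the hypersurface $N=a_n$ inside $\mathbb{V}(\mathcal{A})$; over $F$ itself, $W$ need not have a rational point, and if it did the whole construction would be unnecessary. The projection formula must be applied over $k(W)$ with the $k(W)$-algebra $E\otimes_F k(W)$ and its tautological norm-$a_n$ element; this yields $\{a_1,\ldots,a_n\}=0$ in $K^M_n(k(W))/\ell$, which is the statement you want, but your claim that it vanishes in $K^M_n(F)/\ell$ is wrong.

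Second, your $\ell$-genericity argument has a gap and you misjudge its difficulty. You write that ``the inductive hypothesis supplies a prime-to-$\ell$ extension $L_1/L$ on which $\{a_1,\ldots,a_{n-1}\}$ already vanishes.'' It does not: the inductive hypothesis concerns fields that split $\{a_1,\ldots,a_{n-1}\}$, while you are handed a field $L$ that splits the full symbol $\{a_1,\ldots,a_n\}$. Producing a prime-to-$\ell$ extension of $L$ that splits the shorter subsymbol is not automatic; it is part of what must be proved. More broadly, your closing claim that $\ell$-genericity is ``essentially a formal consequence of the construction together with lower-weight Bloch--Kato'' is not accurate. The splitting property is the formal part. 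The $\ell$-genericity and the verification of the conditions in \cite[Definition 0.5]{HaesemeyerWeibel} are the deep parts: they rest on Rost's degree formula, the computation that $Y_{\underline{a}}$ is a $\nu_{n-1}$-variety (nontriviality of the characteristic number $s_{\ell^{n-1}-1}$ modulo $\ell$), specialization arguments, and --- in the Suslin--Joukhovitski and Haesemeyer--Weibel treatments --- a substantial portion of the theory of symmetric powers of motives and Rost nilpotence. A correct writeup would need to foreground these inputs rather than defer them to a single sentence at the end.
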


\begin{rem}
\label{rem:retractrational}
Repeatedly applying Proposition \ref{prop:rostvarietiesrcprop}, one sees that the varieties $Y_{\underline{a}}$ are, after passing to an algebraic closure of the base field, rationally connected.  However, it is to the best of the author's knowledge not known whether the varieties $Y_{\underline{a}}$ inherit any finer rationality properties from the construction.  For example, it is not known in general whether $Y_{\underline{a}}$ is a retract $k$-rational variety for an arbitrary prime $\ell$ and symbol in $\underline{a} \in K^M_n(k)/\ell$.
\end{rem}

By \cite[Lemma 6.4]{VModl}, if the symbol $\underline{a}$ is non-trivial in $K^M_n(k)/\ell$, then its image in $H^n_{\et}(k,\mu_{\ell}^{\tensor n})$ under $\psi_{n,\ell}$ remains non-zero.  If $Y_{\underline{a}}$ is a Rost variety attached to a symbol $\underline{a}$, it follows from this observation that the class of $\underline{a}$ defines a non-trivial element in $H^{n,n-1}(\breve{C}(Y_{\underline{a}},\Z/\ell)$, which by Lemma \ref{lem:kernelofunramified} is precisely equal to the kernel of the map $H^{n}_{\et}(\Spec k,\Z/\ell) \to H^n_{ur}(Y_{\underline{a}}/k,\mu_{\ell}^{\tensor n}))$.  Merkurjev and Suslin computed the motivic cohomology of $\breve{C}(Y_{\underline{a}})$.  We summarize one consequence of their main result, which provides a partial analog of Theorem \ref{thm:ovv} in the case where $\ell \neq 2$.

\begin{thm}[{\cite[Proposition 1.4 and Theorem 1.15]{MerkurjevSuslin}}]
\label{thm:merkurjevsuslin}
If $\underline{a}$ is a non-trivial symbol, and $Y_{\underline{a}}$ is a corresponding Rost variety, the kernel of the map $H^{n}_{\et}(\Spec k,\Z/\ell) \longrightarrow H^n_{ur}(Y_{\underline{a}}/k,\mu_{\ell}^{\tensor n})$ is $\Z/\ell$ generated by the class of the symbol $\underline{a}$.
\end{thm}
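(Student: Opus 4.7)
The plan is to invoke the exact sequence of Lemma \ref{lem:kernelofunramified} applied to $X = Y_{\underline{a}}$:
\[
H^{n,n-1}(\breve{C}(Y_{\underline{a}}),\Z/\ell) \stackrel{\delta}{\longrightarrow} H^{n}_{\et}(\Spec k,\Z/\ell) \longrightarrow H^n_{ur}(Y_{\underline{a}}/k,\mu_{\ell}^{\tensor n}).
\]
This immediately identifies the kernel in question with the image of the connecting map $\delta$ coming from the distinguished triangle of Lemma \ref{lem:distinguished}; the remainder of the argument therefore consists of identifying this image explicitly as $\Z/\ell \cdot \underline{a}$.

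First, I would reduce the problem to a motivic cohomology computation by invoking Merkurjev and Suslin's \cite[Proposition 1.4 and Theorem 1.15]{MerkurjevSuslin}. Their calculation determines the mod $\ell$ motivic cohomology of $\breve{C}(Y_{\underline{a}})$ in the relevant bidegree using that $Y_{\underline{a}}$ is a Rost variety (so, in particular, an $\ell$-generic splitting variety for $\underline{a}$) together with the mod $\ell$ norm residue isomorphism in weights strictly less than $n$, which is available by Theorem \ref{thm:normresidue}. The output I would extract is that $H^{n,n-1}(\breve{C}(Y_{\underline{a}}),\Z/\ell)$ is cyclic of order $\ell$ and that $\delta$ sends a chosen generator precisely to the class of the symbol $\underline{a}$ in $H^n_{\et}(\Spec k,\Z/\ell)$.

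Next, I would verify that this image is genuinely nonzero, so that the kernel equals $\Z/\ell$ rather than collapsing to zero. By Voevodsky's \cite[Lemma 6.4]{VModl}, a nonzero symbol in $K^M_n(k)/\ell$ has nonzero image under the norm residue map $\psi_{n,\ell}$, so the hypothesis that $\underline{a}$ is a non-trivial symbol guarantees that its class remains nonzero in $H^n_{\et}(\Spec k,\Z/\ell)$. Combined with the previous step this concludes that the kernel is exactly $\Z/\ell$ generated by $\underline{a}$.

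The main obstacle is, unsurprisingly, the Merkurjev-Suslin computation itself: translating their motivic-cohomology statement into the precise assertion that $\delta$ hits the specific class $\underline{a}$, rather than merely some element of a cyclic group of order $\ell$, requires keeping careful track of how the distinguished triangle of Lemma \ref{lem:distinguished} interacts with the functor ${\bf M}(\breve{C}(Y_{\underline{a}}),-)$ and with the norm residue isomorphism in weight $n-1$. Once that input is absorbed as a black box, the assembly of the exact sequence, the cyclicity of $H^{n,n-1}(\breve{C}(Y_{\underline{a}}),\Z/\ell)$, and the nonvanishing of $\psi_{n,\ell}(\underline{a})$ is essentially formal.
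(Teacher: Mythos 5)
Your proposal matches the paper's approach: the paper itself gives no separate proof but precedes the statement with exactly the framework you reconstruct — apply Lemma \ref{lem:kernelofunramified} to $X = Y_{\underline{a}}$, invoke \cite[Lemma 6.4]{VModl} to guarantee that $\underline{a}$ is nonzero in $H^n_{\et}(k,\mu_\ell^{\tensor n})$, and defer the identification of $H^{n,n-1}(\breve{C}(Y_{\underline{a}}),\Z/\ell)$ and its image under the connecting map to the cited Merkurjev--Suslin computation. You have correctly located both the exact sequence and the nonvanishing input, and correctly flagged that the real content (cyclicity of the relevant motivic cohomology group and the fact that $\delta$ hits $\underline{a}$ rather than merely some cyclic subgroup) is the black box supplied by \cite[Proposition 1.4 and Theorem 1.15]{MerkurjevSuslin}.
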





\section{Vanishing results}
The goal of this section is to prove some vanishing results for unramified cohomology.

\begin{thm}
\label{thm:vanishing}
Fix an integer $n > 1$.  Let $k$ be the function field of any variety $Y$ over $\cplx$ for any integer $m$) such that $H^i_{ur}(Y/\cplx,\mu_2^{\tensor i}) = 0$ for $i < n$.  If $\underline{a}$ is a symbol of length $n$, the group $H^i_{ur}(k(Q_{\underline{a}})/\cplx,\mu_2^{\tensor i})$ is trivial for $i \leq n-1$.
\end{thm}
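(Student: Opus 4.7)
My plan is to apply Theorem \ref{thm:ovv} twice: once over $k$ to identify unramified classes on $Q_{\underline a}/k$, and once over each residue field $\kappa_\nu$ of $k/\cplx$ to descend the argument to $Y$.

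Set $L = k(Q_{\underline{a}})$ and take $\beta \in H^i_{ur}(L/\cplx, \mu_2^{\otimes i})$ with $i \le n - 1$. Every geometric valuation of $L/k$ is a fortiori $\cplx$-trivial, so $\beta$ automatically lies in $H^i_{ur}(Q_{\underline{a}}/k, \mu_2^{\otimes i})$. Since $i < n$, Theorem \ref{thm:ovv} provides a unique $\alpha \in H^i_{\et}(\Spec k, \mu_2^{\otimes i})$ with $\beta = \operatorname{res}_{L/k}(\alpha)$, so it suffices to show $\alpha \in H^i_{ur}(k/\cplx, \mu_2^{\otimes i})$, since this group equals $H^i_{ur}(Y/\cplx, \mu_2^{\otimes i})$ and vanishes by hypothesis.

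To that end, fix a discrete valuation $\nu$ of $k/\cplx$ with residue field $\kappa_\nu$; the task is to prove $\partial_\nu(\alpha) = 0$. After rescaling each $a_j$ by a square in $k^{\ast}$ one may assume $\nu(a_j) \in \{0,1\}$. When every $\nu(a_j) = 0$, the quadric $Q_{\underline a}$ has good reduction: it admits a smooth model over $\O_{k,\nu}$ whose special fiber is the small Pfister quadric $\bar Q$ over $\kappa_\nu$ attached to the reduced symbol $\bar{\underline a}$. The generic point of $\bar Q$ gives a valuation $\tilde\nu$ on $L$ extending $\nu$ with $e(\tilde\nu/\nu) = 1$ and $\kappa_{\tilde\nu} = \kappa_\nu(\bar Q)$. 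Since $\beta$ is unramified at $\tilde\nu$, compatibility of residues with restrictions yields
\[
0 \;=\; \partial_{\tilde\nu}(\operatorname{res}_{L/k}(\alpha)) \;=\; e(\tilde\nu/\nu)\cdot \operatorname{res}_{\kappa_\nu(\bar Q)/\kappa_\nu}(\partial_\nu(\alpha)),
\]
and a second application of Theorem \ref{thm:ovv} in degree $i-1 < n$, now over the base field $\kappa_\nu$, says that the restriction $H^{i-1}_{\et}(\kappa_\nu, \mu_2^{\otimes(i-1)}) \to H^{i-1}_{ur}(\bar Q/\kappa_\nu, \mu_2^{\otimes(i-1)})$ is an isomorphism. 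Composing with the tautological inclusion into $H^{i-1}_{\et}(\kappa_\nu(\bar Q), \mu_2^{\otimes(i-1)})$ gives an injection, forcing $\partial_\nu(\alpha) = 0$.

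The step I expect to be the main obstacle is disposing of the bad-reduction case, in which some $\nu(a_j)$ is forced to be odd. Here I would instead build a proper regular model of $Q_{\underline a}$ over $\O_{k,\nu}$ by resolution of singularities, and exhibit a component of its special fiber of odd multiplicity whose generic point gives a valuation $\tilde\nu$ on $L$ whose residue field $\kappa_{\tilde\nu}$ is a regular extension of $\kappa_\nu$ on which the restriction in $H^{i-1}_{\et}(-, \mu_2^{\otimes(i-1)})$ remains injective for $i-1 < n$. The generic fiber of such a component should turn out to be a (possibly degenerate) Pfister-type quadric to which Theorem \ref{thm:ovv} or an induction on $n$ still applies, closing the argument. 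Granting this, $\alpha$ is unramified at every geometric valuation of $k/\cplx$, hence zero by the hypothesis on $Y$, so $\beta = \operatorname{res}_{L/k}(\alpha) = 0$ as required.
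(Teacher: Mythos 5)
Your good-reduction argument is sound and matches the structure of the paper's proof: lift the class to $\alpha \in H^i_{\et}(k,\mu_2^{\otimes i})$ via Theorem \ref{thm:ovv} over $k$, and for each geometric valuation $\nu$ of $k/\cplx$ exhibit an extension $\tilde\nu$ to $L = k(Q_{\underline a})$ at which the residue of $\beta$ controls the residue of $\alpha$, then use Theorem \ref{thm:ovv} a second time over $\kappa_\nu$ in degree $i-1$ to obtain injectivity of the restriction to $\kappa_{\tilde\nu}$. The commutative square you write down is exactly the one in the paper.

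The genuine gap is the bad-reduction case, and you have correctly identified it but left it unproved. ``After rescaling each $a_j$ by a square one may assume $\nu(a_j) \in \{0,1\}$'' is true but does not produce a normal form: several coefficients can simultaneously have valuation $1$, and your proposed remedy --- take a regular proper model, locate an odd-multiplicity component of the special fiber, and hope its generic fiber is again a Pfister-type quadric to which Theorem \ref{thm:ovv} or an induction applies --- is a program, not an argument. Nothing in the proposal shows such a component exists, that its multiplicity is odd (which you need for the $e(\tilde\nu/\nu)$-factor not to kill the residue mod $2$), or that its residue field is a ``regular extension of $\kappa_\nu$ on which restriction stays injective.'' The paper avoids all of this by invoking a normal-form result for Pfister forms over a discrete valuation ring, Lemma \ref{lem:rewriting} (a consequence of \cite[Corollary 6.13]{EKM}): up to $K$-isomorphism of forms, one may arrange that all but at most one of the slots of the Pfister form lie in $\O_\nu^*$. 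After this rewriting, the reduction of $Q_{\underline a}$ mod $\mathfrak m_\nu$ gives a geometric valuation $\nu'$ of $L$ over $\nu$, unramified, whose residue field is either the function field of a small Pfister $n$-form over $\kappa_\nu$ or a purely transcendental extension of the function field of a small Pfister $(n-1)$-form over $\kappa_\nu$ --- and in either case Theorem \ref{thm:ovv} applies in degree $i-1 < n-1$ to yield the required injectivity, with no resolution of singularities needed. Without Lemma \ref{lem:rewriting} or an equivalent device, your proof does not close.

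A small secondary point: when you apply Theorem \ref{thm:ovv} over $\kappa_\nu$ you should note that if the reduced symbol $\bar{\underline a}$ happens to be trivial in $\kappa_\nu$, the quadric $\bar Q$ is $\kappa_\nu$-rational and the restriction on unramified cohomology is still an isomorphism by stable birational invariance, so the conclusion persists; the hypothesis ``non-trivial symbol'' in Theorem \ref{thm:ovv} is therefore harmless, but worth saying.
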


The proof of Theorem \ref{thm:vanishing} will require us to understand residue maps attached to discrete valuations on $k(Q_{\underline{a}})$.  To this end, we will use the following lemma, which is a direct consequence of \cite[Corollary 6.13]{EKM}.

\begin{lem}
\label{lem:rewriting}
Suppose $K$ is a field (assumed again to have characteristic $0$), and $\nu$ is a discrete valuation of $K$ with associated local ring $\O_{\nu}$ and residue field $\kappa_{\nu}$.  If $a_1,\ldots,a_n$ are elements of $K^*$, there are elements $b_1,\ldots,b_n$ such that $b_1,\ldots,b_{n-1}$ are elements of $\O_{\nu}^*$ and such that over $K$, the quadratic forms $\langle \langle a_1,\ldots, a_n \rangle \rangle$ and $\langle \langle b_1,\ldots,b_n \rangle \rangle$ are $K$-isomorphic.
\end{lem}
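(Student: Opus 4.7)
The plan is to use the multiplicative structure of Pfister forms to shift ramification into a single slot. The starting observation is that $\langle\langle c\rangle\rangle$ depends only on the class of $c$ modulo $(K^*)^2$, since $\langle 1,-c\rangle \cong \langle 1,-cu^2\rangle$ for any $u\in K^*$. Writing $a_i = u_i\pi^{e_i}$ with $\pi$ a uniformizer for $\nu$ and $u_i\in\O_\nu^*$, we may therefore reduce at the outset to the case $e_i\in\{0,1\}$. Our goal then becomes: by applying identities of Pfister forms, replace the tuple $(a_1,\dots,a_n)$ by $(b_1,\dots,b_n)$ so that at most one $b_i$, and in particular none of $b_1,\dots,b_{n-1}$, has odd valuation.

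The key identity I will use is
\[
\langle\langle a,b\rangle\rangle \;\cong\; \langle\langle a,-ab\rangle\rangle,
\]
which follows by expanding both sides as $\langle 1,-a,-b,ab\rangle$ and $\langle 1,-a,ab,-a^2b\rangle$ respectively and using that $a^2\in (K^*)^2$ so $\langle -a^2b\rangle \cong \langle -b\rangle$. Combined with the obvious fact that $\langle\langle a_1,\dots,a_n\rangle\rangle$ is symmetric in its entries (since the defining tensor product is symmetric), this identity allows us to act on any two slots of the Pfister form: if $b_i$ and $b_j$ both have odd valuation, I can move them to adjacent positions, apply the identity to replace $(b_i,b_j)$ by $(b_i,-b_ib_j)$, and obtain a new tuple whose second entry has even valuation, i.e., is a unit modulo squares.

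I will then carry out an induction on the cardinality $r$ of the set $\{i : \nu(a_i)\text{ is odd}\}$. If $r=0$, take $b_i=a_i$. If $r\geq 2$, apply the identity as above to reduce $r$ by at least one, then modify representatives by squares to return to $e_i\in\{0,1\}$, and iterate. After finitely many steps we reach $r\leq 1$; using the symmetry of Pfister forms, we permute the (at most one) odd-valuation entry into the last slot, which produces the desired tuple $(b_1,\dots,b_n)$ with $b_1,\dots,b_{n-1}\in\O_\nu^*$.

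None of these steps is really an obstacle: each is a standard manipulation in the theory of Pfister forms, and indeed the result appears verbatim as \cite[Corollary 6.13]{EKM}. The only mild subtlety is keeping track of the fact that the identity $\langle\langle a,b\rangle\rangle\cong\langle\langle a,-ab\rangle\rangle$ is used on two fixed slots while symmetry is used to choose which pair of slots to act on; this is harmless because both operations preserve the isomorphism class of the full $n$-fold Pfister form.
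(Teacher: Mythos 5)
Your proof is correct: the identity $\langle\langle a,b\rangle\rangle \cong \langle\langle a,-ab\rangle\rangle$ holds (both sides diagonalize to $\langle 1,-a,-b,ab\rangle$ up to permutation and rescaling by squares), and the induction on the number of slots with odd valuation, combined with the symmetry of Pfister forms under permutation of slots and the freedom to adjust each slot by a square, correctly concentrates all ramification in the last slot. The paper does not actually supply a proof---it simply cites \cite[Corollary 6.13]{EKM}---and your argument is precisely the standard derivation of that statement, so there is no real difference in approach to report.
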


\begin{proof}[Proof of Theorem \ref{thm:vanishing}]
Let $K = k(Q_{\underline{a}})$ and suppose $X$ is a smooth proper model of $X$ over $\cplx$.  The group $H^i_{ur}(X/\cplx,\mu_2^{\tensor i})$ is a subgroup of $H^i_{\et}(K/k,\mu_2^{\tensor i})$ by its very definition.  By Theorem \ref{thm:ovv}, the inclusion map
\[
H^i_{\et}(k,\mu_2^{\tensor i}) \longrightarrow H^i_{ur}(Q_{\underline{a}}/k,\mu_2^{\tensor i})
\]
is injective for $i \leq n-1$.

Suppose $\nu$ is a geometric discrete valuation of $k$ with associated valuation ring $\O_{\nu}$, residue field $\kappa_{\nu}$, and let $\pi$ be a local parameter.  Applying Lemma \ref{lem:rewriting}, after multiplying by an element of $k$ if necessary, we can assume that the symbol $\underline{a}$ giving $Q_{\underline{a}}$ has $a_1,\ldots,a_{n-2} \in \O_{\nu}^*$.  The resulting variety can be viewed as a subscheme of ${\mathbb P}^N_{\O_{\nu}}$, and reducing the resulting equation modulo the maximal ideal of $\O_{\nu}$ gives rise to a geometric discrete valuation $\nu'$ of $K$ with residue field $\kappa_{\nu'}$.  Moreover, the local homomorphism $\O_{\nu} \to \O_{\nu'}$ is necessarily unramified, and there is a commutative diagram of the form
\[
\xymatrix{
H^i_{\et}(k,\mu_2^{\tensor i}) \ar[r]\ar[d] & H^{i-1}_{\et}(\kappa_{\nu},\mu_2^{\tensor i-1}) \ar[d] \\
H^i_{\et}(K,\mu_2^{\tensor i}) \ar[r] & H^{i-1}_{\et}(\kappa_{\nu'},\mu_2^{\tensor i-1}).
}
\]
If $i < n$, the left hand morphism is an injective by Theorem \ref{thm:ovv}.  We claim the right hand morphism is injective as well.  Indeed, the chosen form of the equation shows that $\kappa_{\nu'}$ is a function field of either a small Pfister $n$-form over $\kappa_\nu$ or a purely transcendental extension of the function field a small Pfister $n-1$-form over $\kappa_{\nu}$.  Since $i-1 < n-1 < n$, in the first case, injectivity of the right hand arrow follows from another application of Theorem \ref{thm:ovv}.  In the second case, injectivity for a purely transcendental extension is clear, and injectivity of the right hand arrow then follows by yet another application of Theorem \ref{thm:ovv}.

Suppose we take a class in $H^i_{ur}(X/\cplx,\mu_2^{\tensor i})$ and view it as an element of $H^i_{\et}(k,\mu_2^{\tensor i})$ by means of the isomorphism of Theorem \ref{thm:ovv}.  A diagram chase show Such a class is unramified at every discrete valuation $\nu'$ of $K$ of the form in the previous paragraph and therefore comes from a class in $H^i_{ur}(k/\cplx,\mu_i^{\tensor 2})$.  Since $k$ is the function field of an $\aone$-connected variety, it follows that the group $H^i_{ur}(k/\cplx,\mu_i^{\tensor 2})$ is trivial and therefore also that the original class is trivial as well.
\end{proof}

\begin{rem}
The hypotheses of the theorem are satisfied for $Y$ any $\aone$-connected smooth complex variety by \cite[Proposition 3.5 and Lemma 4.7]{ABirational}.  In what follows, we will only use the case where $Y$ is projective space, but as shown in \cite[Theorem 2.3.6.ii]{AM}, any stably rational (or, more generally, retract rational) smooth proper complex variety is $\aone$-connected.  Inductively applying the theorem, one deduces the following result.
\end{rem}

\begin{cor}
\label{cor:vanishing}
Fix an integer $n > 1$.  Let $k$ be the function field of an $\aone$-connected variety $Y$.  Suppose $\underline{a}_1,\ldots,\underline{a}_r$ are a sequence of symbols of length $r$ in $k$.  Let $K$ be the function field of the product $k(Q_{\underline{a}_1} \times \cdots \times Q_{\underline{a}_r})$, and let $X$ be any smooth proper model of $K$ over $\cplx$.  For any integer $i < n$, $H^i_{ur}(X/\cplx,\mu_2^{\tensor i}) = 0$.
\end{cor}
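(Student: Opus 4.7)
The plan is to prove the corollary by induction on $r$, with Theorem \ref{thm:vanishing} as the inductive step. For the base case $r = 0$ one has $K = k = \cplx(Y)$ with $Y$ an $\aone$-connected smooth complex variety, and by the remark preceding the corollary (citing \cite[Proposition 3.5 and Lemma 4.7]{ABirational}) one already knows $H^i_{ur}(Y/\cplx,\mu_2^{\tensor i}) = 0$ for $i < n$. Alternatively one may take $r = 1$ as the base case, which is a direct application of Theorem \ref{thm:vanishing}.

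For the inductive step, assume the statement for $r - 1$ symbols. Set $k' := k(Q_{\underline{a}_1} \times \cdots \times Q_{\underline{a}_{r-1}})$ and let $Y'$ be any smooth proper $\cplx$-model of $k'$. The inductive hypothesis, applied to the symbols $\underline{a}_1,\ldots,\underline{a}_{r-1}$ in $k$, gives $H^i_{ur}(Y'/\cplx,\mu_2^{\tensor i}) = 0$ for $i < n$, so the field $k'$ and the $\cplx$-variety $Y'$ satisfy the hypotheses of Theorem \ref{thm:vanishing}. Because function field formation commutes with base extension, $K$ coincides with $k'(Q_{\underline{a}_r})$, and the symbol $\underline{a}_r$ of length $n$ lies in $k \subseteq k'$. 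Applying Theorem \ref{thm:vanishing} to the data $(Y', k', \underline{a}_r)$ therefore produces $H^i_{ur}(K/\cplx, \mu_2^{\tensor i}) = 0$ for all $i \leq n - 1$, and by the birational invariance of unramified cohomology this is the desired vanishing on any smooth proper model $X$ of $K$.

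The only point that needs care at each inductive stage is to verify that the intermediate base field $k'$ is itself of the form required by Theorem \ref{thm:vanishing}, namely the function field of a smooth proper complex variety whose unramified cohomology vanishes in degrees less than $n$. This is precisely what the inductive hypothesis supplies, so no substantive obstacle beyond repeated invocation of Theorem \ref{thm:vanishing} itself arises; no further geometric input on the product of quadrics is needed.
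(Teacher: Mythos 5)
Your proof is correct and matches the paper's intent exactly: the paper offers no written proof beyond the remark ``Inductively applying the theorem, one deduces the following result,'' and your inductive argument is precisely the implementation of that remark. You correctly observe the crucial point that Theorem \ref{thm:vanishing} only requires the base field to be the function field of a $\cplx$-variety with vanishing unramified cohomology in degrees $<n$ (not $\aone$-connectivity per se), which is exactly what the inductive hypothesis supplies for $Y'$; and you correctly identify $K$ with $k'(Q_{\underline{a}_r})$ to close the induction.
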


\begin{rem}
There are (at least) two immediate problems with attempting to adapt the proof of Theorem \ref{thm:vanishing} to study lower degree unramified cohomology of Rost varieties.  First, as observed in Remark \ref{rem:retractrational}, we do not know if $Y_{\underline{a}}$ is retract rational, which makes description of the cokernel of the restriction map in unramified cohomology more difficult.  Second, the author is (perhaps due to his own ignorance) unaware of any simple result like Lemma \ref{lem:rewriting} that can be used to obtain ``normal forms" for Rost varieties over discrete valuation rings, though certainly Rost's chain lemma \cite{HaesemeyerWeibel} comes to mind.
\end{rem}

\section{A non-vanishing result}
In this section, for simplicity, we work over $\cplx$.  We combine the Orlov-Vishik-Voevodsky theorem together with Peyre's method to show that if $k$ is the function field of a well-chosen complex variety, and $K$ is a suitable product of small Pfister quadrics attached to symbols of length $n$, the non-trivial class in $H^n_{ur}(K/k,\mu_2^{\tensor n})$ guaranteed by Theorem \ref{thm:ovv} is actually contained in the subgroup $H^n_{ur}(K/\cplx,\mu_2^{\tensor n})$.

\subsubsection*{Peyre's results}
Suppose $F$ is the function field of a variety over $\cplx$.  Let $\ell$ be a prime number.  Following Peyre, we fix a finite dimensional ${\mathbb F}_{\ell}$-vector space $V$, and a morphism $\phi^1: V^{\vee} \to H^1_{\et}(F,\mu_{\ell})$.  The homomorphism $\phi^1$ extends to a morphism of graded ${\mathbb F}_{\ell}$-algebras
\[
\phi: \Lambda^{\cdot} (V^{\vee}) \longrightarrow H^i_{\et}(F,\mu_{\ell}^{\tensor i}).
\]
We fix an identification $\Lambda^i (V^{\vee}) \to (\Lambda^i V)^{\vee}$ by means of the map
\[
f_1 \wedge \cdots \wedge f_i \mapsto \{v_1 \wedge \cdots \wedge v_i \mapsto \sum_{\sigma \in S_i} sgn(\sigma) f_1(v_{\sigma(1)}) \cdots f_i(v_{\sigma(i)})\}.
\]
Having fixed this identification, for any basis $v_1,\ldots,v_n$ of $V$, with dual basis of $V^{\vee}$ given by $v_1^{\vee},\cdots,v_n^{\vee}$, the dual basis of $(v_{j_1} \wedge \cdots \wedge v_{j_i})_{j_1 < \cdots < j_i}$ is given by $(v_{j_1}^{\vee} \wedge \cdots \wedge v_{j_n}^{\vee})$.  With these choices, we get a morphism
\[
\phi^i: (\Lambda^i V)^{\vee} \longrightarrow H^i_{\et}(F,\mu_{\ell}^{\tensor i}).
\]

Let $S^i = \ker(\varphi^i)^{\perp} \subset \Lambda^i V$.  There is an induced morphism
\[
\hat{\varphi^i}: \hom(S^i,{\mathbb F}_{\ell}) \longrightarrow H^i_{\et}(F,\mu_{\ell}^{\tensor i}),
\]
which is necessarily injective.  We define $S^i_{dec} \subset S^i$ to be the subgroup of $S^i$ generated by elements of the form $v \wedge v'$ with $v \in V$ and $v' \in \Lambda^{i-1} V$.  With this notation, the next result is a special case of Peyre's results.

\begin{thm}[{\cite[Theorem 2 and Corollary 3]{Peyre1}}]
\label{thm:peyre}
If $f$ is an element of $\hom(S^i,{\mathbb F}_{\ell})$ such that $f|_{S^i_{dec}}$ is zero, then $\hat{\varphi}^i(f) \in H^i_{ur}(F/\cplx,\mu_{\ell}^{\tensor i})$.  Furthermore, if $S^i_{dec} \neq S^i$, then $H^i_{ur}(F/\cplx,\mu_{\ell}^{\tensor i}) \neq 0$.
\end{thm}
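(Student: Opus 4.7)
My plan is to deduce both assertions from a single residue computation, following Peyre's approach in the low-degree cases already treated in the literature. For the first assertion it suffices to show that the residue $\partial_\nu\hat{\varphi}^i(f)\in H^{i-1}_\et(\kappa_\nu,\mu_\ell^{\otimes(i-1)})$ vanishes for every geometric discrete valuation $\nu$ of $F$ trivial on $\cplx$. Fix a lift $\tilde{f}\in(\Lambda^i V)^\vee$ of $f$; choosing a basis $\{v_j\}$ of $V$ and a Kummer representative $a_j\in F^*$ of each class $\phi^1(v_j^\vee)$, one has
\[
\hat{\varphi}^i(f) = \phi^i(\tilde f) = \sum_{j_1<\cdots<j_i}\tilde f(v_{j_1}\wedge\cdots\wedge v_{j_i})\,(a_{j_1})\cup\cdots\cup(a_{j_i})
\]
in $H^i_\et(F,\mu_\ell^{\otimes i})$, and we must compute the residue of this expression at each $\nu$.

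Fix $\nu$ with residue field $\kappa_\nu$ and consider the composite $V^\vee\xrightarrow{\phi^1}F^*/(F^*)^\ell\xrightarrow{v_\nu}{\mathbb F}_\ell$, the second arrow being reduction of the valuation modulo $\ell$. If this vanishes, all $a_j$ may be chosen in $\O_\nu^*$ and Milnor's residue formula kills every symbol termwise; otherwise pick the basis so that $v_2^\vee,\ldots,v_n^\vee$ span the kernel, set $a_1$ to be a uniformizer and $a_2,\ldots,a_n\in\O_\nu^*$, and let $V^{(\nu)}:=\mathrm{span}(v_2,\ldots,v_n)$. Defining $\phi_\nu^1\colon(V^{(\nu)})^\vee\to\kappa_\nu^*/(\kappa_\nu^*)^\ell$ by $v_j^\vee\mapsto\bar a_j$ and extending multiplicatively, Milnor's residue formula combined with Theorem \ref{thm:normresidue} identifies
\[
\partial_\nu\hat{\varphi}^i(f) = \phi_\nu^{i-1}(\iota^\vee\tilde f),
\]
where $\iota\colon\Lambda^{i-1}V^{(\nu)}\hookrightarrow\Lambda^i V$ is the map $w\mapsto v_1\wedge w$. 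The commutativity of residues with $\phi^{\bullet}$ and $\phi_\nu^{\bullet}$ forces $\iota^\vee$ to send $\ker(\phi^i)$ into $\ker(\phi_\nu^{i-1})$; dually one obtains $v_1\wedge S_\nu^{i-1}\subset S^i$ for $S_\nu^{i-1}:=\ker(\phi_\nu^{i-1})^\perp$, so that $g(s):=f(v_1\wedge s)$ is a well-defined element $g\in\hom(S_\nu^{i-1},{\mathbb F}_\ell)$.

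The crucial input from the hypothesis is now the following: if $s=u_1\wedge\cdots\wedge u_{i-1}$ is a decomposable element of $S_\nu^{i-1}$, then $v_1\wedge s$ is a decomposable element of $S^i$, hence lies in $S^i_{dec}$, so $g(s)=f(v_1\wedge s)=0$. Thus $g|_{S_{\nu,dec}^{i-1}}=0$, and the residue $\partial_\nu\hat{\varphi}^i(f)=\phi_\nu^{i-1}(g)$ is a Peyre-type class of one lower degree on the function field $\kappa_\nu$, which has strictly smaller transcendence degree over $\cplx$ than $F$. I would then close the proof by induction on the pair $(i,\mathrm{tr.deg}_\cplx F)$, with base cases $i=1$ (where $S^1_{dec}=S^1$, forcing $f=0$) and $F=\cplx$ (where $H^j_\et(\cplx,\mu_\ell^{\otimes j})=0$ for $j\geq 1$). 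The main technical obstacle I anticipate is that the inductive hypothesis a priori only gives that $\phi_\nu^{i-1}(g)$ lies in $H^{i-1}_{ur}(\kappa_\nu/\cplx)$, not that the residue itself vanishes; upgrading this should use that the cohomological dimension of $\kappa_\nu$ over $\cplx$ equals its transcendence degree, so that iterating the residue construction through successive geometric valuations eventually drops into $H^0_\et(\cplx)=0$.

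For the second assertion, when $S^i_{dec}\neq S^i$ the quotient $\hom(S^i/S^i_{dec},{\mathbb F}_\ell)$ is a nonzero subspace of $\hom(S^i,{\mathbb F}_\ell)$; picking any nonzero $f$ in it, the first part gives $\hat{\varphi}^i(f)\in H^i_{ur}(F/\cplx,\mu_\ell^{\otimes i})$, and the injectivity of $\hat{\varphi}^i$ built into the definition of $S^i$ ensures the class is nonzero, so $H^i_{ur}(F/\cplx,\mu_\ell^{\otimes i})\neq 0$.
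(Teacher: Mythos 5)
Your overall strategy---reduce to a residue computation via the norm residue isomorphism, choose a basis adapted to the valuation $\nu$ so that $a_1$ is a uniformizer and $a_2,\ldots,a_n$ are units, identify $\partial_\nu\hat{\varphi}^i(f)$ with $\phi_\nu^{i-1}(\iota^\vee\tilde f)$, and use duality to show $v_1\wedge S_\nu^{i-1}\subset S^i$---is correct and is essentially Peyre's. Note, however, that the paper itself does not prove this statement; it cites \cite[Theorem 2 and Corollary 3]{Peyre1}, so there is no ``paper's own proof'' to compare against beyond that reference.

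Where your argument goes wrong is precisely the obstacle you flag yourself. You only conclude $g(s)=f(v_1\wedge s)=0$ for $s$ \emph{decomposable} in $S_\nu^{i-1}$, which gives $g|_{S^{i-1}_{\nu,dec}}=0$ and then lets you invoke induction to place the residue class $\phi_\nu^{i-1}(g)$ in $H^{i-1}_{ur}(\kappa_\nu/\cplx,\mu_\ell^{\otimes(i-1)})$. But ``unramified'' is not ``zero,'' and the proposed upgrade via cohomological dimension cannot work: a class all of whose residues vanish is, by definition, unramified, not trivial---the very existence of nontrivial unramified classes is the content of the theorem being proved. Iterating the construction would at best keep telling you the class is unramified at each stage, never that it vanishes.

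The fix is to drop the restriction to decomposable $s$ entirely. The paper's definition of $S^i_{dec}$ is the subgroup of $S^i$ generated by elements of the form $v\wedge v'$ with $v\in V$ and $v'\in\Lambda^{i-1}V$ \emph{arbitrary}---$v'$ need not itself be decomposable. So for \emph{every} $s\in S_\nu^{i-1}$ (not just the decomposable ones), the element $v_1\wedge s$ has this form, and your duality argument places it in $S^i$; hence $v_1\wedge s\in S^i_{dec}$ and $g(s)=f(v_1\wedge s)=0$. Thus $g\equiv 0$ on all of $S_\nu^{i-1}$, so $\partial_\nu\hat{\varphi}^i(f)=\hat{\varphi}_\nu^{i-1}(g)=0$ outright, with no induction on $(i,\operatorname{tr.deg}_\cplx F)$ needed. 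Your proof of the second assertion (pick a nonzero $f\in\hom(S^i/S^i_{dec},\mathbb F_\ell)$ and use injectivity of $\hat{\varphi}^i$) is correct as written.
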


\subsubsection*{A construction}
Fix an integer $n > 0$. Let $k' = \cplx(t_1,\ldots,t_{2n})$ and $k = \cplx(t_1^{\ell},\ldots,t_{2n}^{\ell})$ and consider the inclusion $k \hookrightarrow k'$.  Let $V$ be an ${\mathbb F}_{\ell}$-vector space of dimension $2n$ with a chosen basis $v_1,\ldots,v_{2n}$; there is an isomorphism $V \isomt Gal(k'/k)$ and an injection $\phi^1: V^{\vee} \hookrightarrow H^1_{\et}(k,\mu_{\ell})$ that sends $v_j^{\vee}$ to the class of $t_j^{\ell}$.

If $K$ is a function field over $k$ that contains $k'$, we set $\phi^1_K$ to be the composite map
\[
\phi^1_K: V^{\vee} \longrightarrow H^1_{\et}(k,\mu_{\ell}) \longrightarrow H^1_{\et}(K,\mu_{\ell}).
\]
and set
\[
\phi^i_K: (\Lambda^i V)^{\vee} \longrightarrow H^i_{\et}(K,\mu_{\ell}^{\tensor i}).
\]
to be the map induced by exterior product on the left and cup product on the right.

As Peyre explains, the problem of finding a non-rational field with non-rationality detected by an unramified $\ell$-torsion class in the above setup translates in the following problem:  find a subspace $S \subset \Lambda^i V$ and an extension $K/k$ of function fields satisfying
\begin{itemize}
\item[i)] the kernel of the map $\phi^i_K: (\Lambda^i V)^{\vee} \to H^i_{\et}(K,\mu_{\ell}^{\tensor i})$ is $S^{\perp}$, and
\item[ii)] $S \neq S_{dec}$.
\end{itemize}

Define a degree $n$ unramified class as follows.  In terms of the basis $v_1,\ldots,v_{2n}$ described above, take a subspace $S$ of $\Lambda^n V$ spanned by the vector $v_1 \wedge \cdots \wedge v_{n} + v_{n+1} \wedge \cdots v_{2n}$.  The vector $v_1 \wedge \cdots \wedge v_{n} + v_{n+1} \wedge \cdots v_{2n}$ is not contained in $S_{dec}$.  Furthermore, we can explicitly compute
\[
S^{\perp} = \operatorname{Span}\left\langle\begin{matrix} v_{j_1}^{\vee} \wedge \cdots \wedge v_{j_n}^{\vee} \text{ for } 1 \leq j_1 < \cdots < j_n \leq 2n, \text{ with } \\ (j_1,\ldots,j_n) \notin \{(1,\ldots,n),(n+1,\ldots,2n)\}, \\ \text{ and } v_1^{\vee} \wedge \cdots \wedge v_n^{\vee} - v_{n+1}^{\vee} \wedge \cdots \wedge v_{2n}^{\vee} \end{matrix} \right\rangle.
\]
Thus, there exists a basis for $S^{\perp}$ where each element is of the form $w_{j_1} \wedge \cdots \wedge w_{j_n}$ for $w_{j_1}, \ldots, w_{j_n} \in V^{\vee}$.  Fix such a basis, and call it $I$.  Each element $s$ of $I$ defines a symbol
\[
\underline{s} := \hat{\varphi}^n(s) \in H^n_{\et}(k,\mu_{\ell}^{\tensor n}).
\]
We now use these symbols to provide the examples mentioned in the introduction.  First, we deal with the case $\ell = 2$.  The next result gives a more precise version of Theorem \ref{thmintro:mainexample} from the Introduction.

\begin{thm}
\label{thm:nonvanishing}
If $K = k(\prod_{s \in I} Q_{\underline{s}})$ (see \textup{Definition \ref{defn:pfisterforms}}), $X$ is any smooth proper model of $K$ over $\cplx$, and $m \geq 2$ is any integer, then $H^i_{ur}(X/\cplx,\mu_m^{\tensor i}) = 0$ for $i < n$, $H^n_{ur}(X/\cplx,\mu_2^{\tensor n}) \neq 0$, and $X$ is unirational.  In particular, $X$ is not $\aone$-connected.
\end{thm}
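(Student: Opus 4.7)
The plan is to verify the hypotheses of Peyre's theorem (Theorem \ref{thm:peyre}) in degree $n$ for the map $\phi^1_K\colon V^\vee \to H^1_{\et}(K,\mu_2)$. The central intermediate claim is that $S^n := \ker(\phi^n_K)^\perp$ equals the line $S \subset \Lambda^n V$ spanned by $w_0 := v_1 \wedge \cdots \wedge v_n + v_{n+1} \wedge \cdots \wedge v_{2n}$. Granted this equality, $w_0$ is not decomposable in $\Lambda^n V$ for $n \geq 2$ (left multiplication by $w_0$ defines an injective map $V \to \Lambda^{n+1}V$, since for $n \geq 2$ the two families of cross terms land in disjoint subsets of the standard basis of $\Lambda^{n+1}V$, whereas the same operation would have an $n$-dimensional kernel for any decomposable $n$-form). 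Hence $S^n_{\mathrm{dec}} = 0 \neq S^n$, and Theorem \ref{thm:peyre} directly produces a nonzero unramified class in $H^n_{ur}(K/\cplx,\mu_2^{\tensor n}) = H^n_{ur}(X/\cplx,\mu_2^{\tensor n})$.

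The lower-degree vanishing is immediate from Corollary \ref{cor:vanishing}: $k = \cplx(t_1^2,\ldots,t_{2n}^2)$ is purely transcendental over $\cplx$, hence the function field of the rational (in particular $\aone$-connected) variety $\mathbb{A}^{2n}_\cplx$, while $K$ is by construction the function field of a product of length-$n$ small Pfister quadrics over $k$. For unirationality, pass to the Kummer extension $k' := \cplx(t_1,\ldots,t_{2n})$ of $k$, of degree $2^{2n}$. Over $k'$ every class $(t_j^2)$ becomes a square, so each symbol $\underline{s}$ for $s \in I$ vanishes in $H^n_{\et}(k',\mu_2^{\tensor n})$; consequently each $Q_{\underline{s}}\times_k k'$ is an isotropic, and so $k'$-rational, small Pfister quadric. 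The compositum $k'(X_{k'})$ is then purely transcendental over $k'$ and hence over $\cplx$, and is a finite extension of $K$; this exhibits $X$ as the target of a dominant generically finite rational map from a smooth proper rational variety.

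The main step, $\ker(\phi^n_K) = S^\perp$, proceeds by double inclusion. The inclusion $S^\perp \subseteq \ker(\phi^n_K)$ is immediate from Theorem \ref{thm:ovv}: for each basis element $s \in I$, the symbol $\underline{s} = \phi^n_k(s)$ dies in $H^n_{\et}(k(Q_{\underline{s}}),\mu_2^{\tensor n})$ and hence in $H^n_{\et}(K,\mu_2^{\tensor n})$. For the reverse inclusion, order $I = \{s_1,\ldots,s_N\}$ and form the tower $k = K_0 \subset K_1 \subset \cdots \subset K_N = K$ with $K_j := K_{j-1}(Q_{\underline{s}_j|_{K_{j-1}}})$. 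Iterating Theorem \ref{thm:ovv}, the kernel of each restriction $H^n_{\et}(K_{j-1},\mu_2^{\tensor n}) \to H^n_{\et}(K_j,\mu_2^{\tensor n})$ is the line generated by $\underline{s}_j|_{K_{j-1}}$, and composing these gives $\ker(H^n_{\et}(k,\mu_2^{\tensor n}) \to H^n_{\et}(K,\mu_2^{\tensor n})) = \sum_j \mathbb{F}_2\cdot\underline{s}_j = \phi^n_k(S^\perp)$. The principal obstacle is ensuring that $\underline{s}_j|_{K_{j-1}} \neq 0$ at each stage so that Theorem \ref{thm:ovv} is applicable; by the inductive description of the kernel this reduces to linear independence of the $\underline{s}_j$ in $H^n_{\et}(k,\mu_2^{\tensor n})$, equivalently to the injectivity of $\phi^n_k$ on all of $(\Lambda^n V)^\vee$, which via the norm residue isomorphism (Theorem \ref{thm:normresidue}) follows from a standard iterated-residue computation in the Milnor $K$-theory of the purely transcendental extension $\cplx(t_1^2,\ldots,t_{2n}^2)/\cplx$, using the geometric discrete valuations with uniformizers $t_j^2$.
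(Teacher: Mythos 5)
Your argument reproduces the paper's proof: Corollary \ref{cor:vanishing} for the low-degree vanishing, a tower induction with Theorem \ref{thm:ovv} to identify $\ker(\phi^n_K)=S^{\perp}$, the observation that $w_0$ is indecomposable so $S\neq S_{dec}$, Theorem \ref{thm:peyre} for the non-vanishing, and passage to the compositum with $k'=\cplx(t_1,\ldots,t_{2n})$ for unirationality. You make explicit two points the paper leaves implicit---the linear independence of the $\underline{s}_j$ in $H^n_{\et}(k,\mu_2^{\tensor n})$ (equivalently, injectivity of $\phi^n_k$, via iterated residues), which is what keeps each $\underline{s}_j|_{K_{j-1}}$ non-zero so that Theorem \ref{thm:ovv} applies at every stage of the tower, and the wedge-multiplication argument showing $w_0$ is not a pure wedge---and you also state correctly that $Kk'/K$ has degree a power of $2$ (dividing $2^{2n}$) rather than the ``degree $2$'' asserted in the paper's proof.
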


\begin{proof}
The vanishing of $H^i_{ur}(X/\cplx,\mu_2^{\tensor i})$ for $i < n$ is an immediate consequence of Corollary \ref{cor:vanishing}.

Since $K = k(\prod_{s \in I} Q_{\underline{s}})$, induction on the number of symbols together with Theorem \ref{thm:ovv} shows that the kernel of the restriction map
\[
H^n_{\et}(k,\mu_2^{\tensor n}) \longrightarrow H^n_{ur}(K/k,\mu_2^{\tensor n})
\]
is ${\Z/2}^{\times I}$ generated by the classes of the symbols $\underline{s}$.  The construction of $I$ given above then implies that the kernel of $\phi^n_K: (\Lambda^n V)^{\vee} \to H^n_{\et}(K,\mu_{\2}^{\tensor n})$ is $S^{\perp}$, and the construction of $S$ shows $S \neq S^{dec}$, so the non-triviality of $H^n_{ur}(X/\cplx,\mu_2^{\tensor n})$ is a consequence of Theorem \ref{thm:peyre}.

By definition, the image of any symbol $\underline{s} \in I$ in $H^n_{\et}(k,\mu_2^{\tensor n})$ in $H^n_{\et}(k',\mu_2^{\tensor n})$ is zero (see just after Theorem \ref{thm:peyre} for this notation).  Thus, the $Q_{\underline{s}}$ have a $k'$-rational point and are therefore $k'$-rational.  It follows that there is a degree $2$ extension of $K$ that is purely transcendental over $\cplx$.  That $X$ is not $\aone$-connected is then a consequence of \cite[Proposition 3.5 and Lemma 4.7]{ABirational}.
\end{proof}

\begin{rem}
A product of smooth quadrics splits after an extension of even degree.  Transfer techniques can then be used to show that the unramified cohomology with $\mu_m$-coefficients of such a product is trivial if $m$ is odd.  For $i < n$, the vanishing of unramified cohomology with $\mu_2$ coefficients can be used inductively to show that unramified cohomology of $\mu_{2^r}$ coefficients also vanishes.
\end{rem}

The following result provides a precise version of Theorem \ref{thmintro:lgeneric} from the introduction.

\begin{thm}
\label{thm:rcrostvarieties}
Suppose $\ell \neq 2$ is a prime.  Let $K = k(\prod_{s \in I} Y_{\underline{s}})$, where $Y_{\underline{s}}$ is the Rost variety attached to the symbol $\underline{s}$ (see \textup{Theorem \ref{thm:rostvarietiesexist}}).  If $X$ is a smooth projective model of $K/\cplx$, then $X$ is rationally connected, and $H^n_{ur}(X/\cplx,\mu_{\ell}^{\tensor n}) \neq 0$.  In particular, $X$ is not $\aone$-connected.
\end{thm}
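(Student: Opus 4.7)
The plan is to run the same strategy as in the proof of Theorem \ref{thm:nonvanishing}, replacing the small Pfister quadrics and Theorem \ref{thm:ovv} by the Rost varieties $Y_{\underline{s}}$ and Theorem \ref{thm:merkurjevsuslin}. Rational connectivity of $X$ follows by iterating Corollary \ref{cor:rostvarietiesarerationallyconnected}. Enumerate $I = \{s_1,\ldots,s_r\}$, set $K_0 = k = \cplx(t_1^{\ell},\ldots,t_{2n}^{\ell})$ (purely transcendental, hence the function field of a smooth rationally connected $\cplx$-variety), and put $K_i = K_{i-1}(Y_{\underline{s_i},K_{i-1}})$, where $Y_{\underline{s_i},K_{i-1}}$ denotes the base change. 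Each such variety is obtained by iterated $N(-,-,\ell)$ constructions starting from a Severi-Brauer variety, so repeated application of Corollary \ref{cor:rostvarietiesarerationallyconnected} shows that a smooth proper model of $K_i/\cplx$ is rationally connected; since $K_r = K$, $X$ is rationally connected.

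For the non-vanishing of unramified cohomology, I would first establish
\[
\ker\bigl(H^n_{\et}(k,\mu_{\ell}^{\tensor n}) \longrightarrow H^n_{\et}(K,\mu_{\ell}^{\tensor n})\bigr) = \sum_{s \in I} (\Z/\ell) \cdot \underline{s}.
\]
The containment $\supseteq$ is immediate because each $Y_{\underline{s}}$ is a splitting variety for $\underline{s}$ (Theorem \ref{thm:rostvarietiesexist}) and $K \supset k(Y_{\underline{s}})$ for every $s \in I$. For $\subseteq$ I would induct along the tower $k = K_0 \subset K_1 \subset \cdots \subset K_r = K$. At step $i$, $Y_{\underline{s_{i+1}},K_i}$ is a Rost variety for the image of $\underline{s_{i+1}}$ in $H^n_{\et}(K_i,\mu_{\ell}^{\tensor n})$; this image is nonzero because the basic symbols $\{t_{j_1}^{\ell},\ldots,t_{j_n}^{\ell}\}$ for $1 \le j_1 < \cdots < j_n \le 2n$ are linearly independent in $K^M_n(k)/\ell$ (the standard Milnor K-theory calculation for a rational function field), so $\phi^n$ is injective and, via the inductive hypothesis, the $\underline{s}$'s remain linearly independent after restriction to $K_i$. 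Theorem \ref{thm:merkurjevsuslin} applied over $K_i$, together with the definitional inclusion $H^n_{ur} \hookrightarrow H^n_{\et}$, identifies $\ker\bigl(H^n_{\et}(K_i) \to H^n_{\et}(K_{i+1})\bigr) = (\Z/\ell) \cdot (\underline{s_{i+1}}|_{K_i})$, and pulling this back to $H^n_{\et}(k)$ completes the induction.

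Combining the kernel computation with the injectivity of $\phi^n$ yields $\ker(\phi^n_K) = S^{\perp}$, where $S$ is the line spanned by $v_1 \wedge \cdots \wedge v_n + v_{n+1} \wedge \cdots \wedge v_{2n}$. A direct coefficient comparison (writing $v = \sum c_i v_i$ and looking at $v \wedge (v_1 \wedge \cdots \wedge v_n + v_{n+1} \wedge \cdots \wedge v_{2n})$) shows this vector cannot be written as $v \wedge v'$ for $v \in V$ and $v' \in \Lambda^{n-1}V$, so $S_{dec} \neq S$. Peyre's criterion (Theorem \ref{thm:peyre}) then gives $H^n_{ur}(X/\cplx,\mu_{\ell}^{\tensor n}) \neq 0$, and non-$\aone$-connectedness follows from the vanishing of positive-degree unramified cohomology on smooth proper $\aone$-connected varieties (as used at the end of the proof of Theorem \ref{thm:nonvanishing}). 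The main obstacle is the kernel computation: unlike Theorem \ref{thm:ovv}, Theorem \ref{thm:merkurjevsuslin} describes only the kernel (not the cokernel) of the restriction into unramified cohomology. This suffices here, but, consistently with Remark \ref{rem:retractrational}, it prevents an analogous inductive vanishing statement for lower-degree unramified cohomology of the sort proved in Theorem \ref{thm:vanishing}.
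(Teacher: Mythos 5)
Your proposal is correct and follows essentially the same route as the paper: rational connectivity via iterated application of Corollary \ref{cor:rostvarietiesarerationallyconnected}, and non-vanishing via an induction along a tower of Rost varieties using Theorem \ref{thm:merkurjevsuslin} to compute the kernel of restriction, followed by Peyre's criterion (Theorem \ref{thm:peyre}). The only difference is that you spell out the inductive kernel computation (and the needed linear independence of the basic symbols in Milnor $K$-theory of a rational function field) that the paper compresses into the phrase ``an induction argument using Theorem \ref{thm:merkurjevsuslin}.''
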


\begin{proof}
In this case, the rational connectivity of $X$ follows from the statement of Proposition \ref{cor:rostvarietiesarerationallyconnected}.  Since $K = k(\prod_{s \in I} Y_{\underline{s}})$, an induction argument using Theorem \ref{thm:merkurjevsuslin} shows that the kernel of the restriction map
\[
H^n_{\et}(k,\mu_{\ell}^{\tensor n}) \longrightarrow H^n_{ur}(K/k,\mu_{\ell}^{\tensor n})
\]
is $\Z/\ell^{I}$ generated by the classes of the symbols $\underline{s}$.  Again, the construction of $I$ given above then implies that the kernel of $\phi^n_K: (\Lambda^n V)^{\vee} \to H^n_{\et}(K,\mu_{\ell}^{\tensor n})$ is $S^{\perp}$, and the construction of $S$ shows $S \neq S^{dec}$, so the non-triviality of the unramified cohomology group in question is an immediate consequence of \ref{thm:peyre}.  That $X$ is not $\aone$-connected is then a consequence of \cite[Proposition 3.5 and Lemma 4.7]{ABirational}.
\end{proof}

\subsubsection*{Unramified cohomology of rationally connected varieties}
In the above examples, if we fix a prime number $\ell$ and an integer $n$, the rationally connected variety $X$ with an unramified degree $n$ cohomology class modulo $\ell$ has dimension at least $\ell^n-1 + 2n$.  In particular, this number grows rapidly with $n$ for a fixed $\ell$.

\begin{question}
If $X$ is a rationally connected smooth proper complex variety of fixed dimension $d$, are there restrictions on the integers $n$ and primes $\ell$ for which $H^n_{ur}(X/\cplx,\mu_{\ell}^{\tensor n})$ can be non-zero?
\end{question}

Two restrictions on the possible values of $n$ are presented in the following remarks.

\begin{rem}
The unramified cohomology of an irreducible smooth proper complex variety of dimension $d$ vanishes in degrees greater than $d$.  Indeed, if $k$ is a field, write $G_k$ for the absolute Galois group of $k$, $cd_p(G_k)$ for the $p$-cohomological dimension of $G_k$ and $cd(G_k)$ for $sup_p cd_p(G_k)$ (see \cite[I.3]{Serre}).  Combining \cite[II 4.1 Proposition 10' and II 4.2 Proposition 11]{Serre}, one observes that if $X$ is an irreducible smooth complex algebraic variety of dimension $d$, then $cd(G_{\cplx(X)}) = d$.  Since $H^i_{ur}(X/\cplx,\mu_{\ell}^{\tensor n})$ is by definition a subgroup of $H^i_{\et}(\cplx(X),\mu_j^{\tensor n})$ the stated vanishing follows.
\end{rem}

\begin{rem}
For arbitrary $\ell$, if $X$ is a rationally connected smooth proper complex variety, we know that $H^n_{ur}(X/\cplx,\mu_{\ell}^{\tensor n})$ is non-zero only if $n > 1$.  Indeed, if $X$ is a rationally connected smooth proper complex variety, then $H^1_{ur}(X/\cplx,\mu_n) = H^1_{\et}(X,\mu_n) = 0$ for arbitrary $n$ since such varieties are (topologically or \'etale) simply connected.
\end{rem}

Since all rationally connected complex curves and surfaces are rational, the first interesting case of the question is the case where $X$ is a smooth proper rationally connected $3$-fold.  In that case, the question can be given a much more precise form. 

\begin{rem}
If $X$ is a rationally connected smooth proper complex $3$-fold, then a result of Colliot-Th\'el\`ene-Voisin \cite[Corollaire 6.2]{ColliotTheleneVoisin}, which uses Voisin's affirmation of the integral Hodge conjecture for rationally connected $3$-folds, implies that $H^3_{ur}(X/\cplx,\mu_\ell^{\tensor 3})$ is trivial for arbitrary $\ell$.  Thus, if $X$ is a smooth proper rationally connected complex $3$-fold, then only the second degree unramified cohomology can be non-trivial.
\end{rem}

\begin{question}
If $X$ is a smooth proper rationally connected complex $3$-fold, for which primes $\ell$ can $H^2_{ur}(X/\cplx,\mu_{\ell})$ be non-trivial?
\end{question}

\begin{rem}
A natural way to attack the previous problem is to use the minimal model program.  The output of the minimal model program is a Fano fibration, which under the rational connectivity assumption must have rationally connected base.  If the fiber dimension is $1$, since rationally connected surfaces are rational, the varieties in question are conic bundles over rational surfaces.  In this case, the Artin-Mumford example shows that $2$-torsion can appear in the unramified Brauer group.  Moreover, in such examples, only $2$-torsion can appear by \cite[Proposition 3]{ArtinMumford} (see also \cite[Theorem 8.3.3]{IskovkikhProkhorov}).  If the fiber dimension is $2$, the varieties in question are models of geometrically rational surfaces $X$ over $\cplx(t)$.  The Brauer group of a model over $\cplx$ of such a variety injects into the Brauer group of $X$ over $\cplx(t)$.  In the latter case, results of Manin \cite{Manin} can be used to show that only $2$, $3$ and $5$ torsion can appear.  Using a cubic surface over $\cplx(t)$, is it possible to produce a rationally connected $3$-fold with $3$-torsion in the Brauer group?  Likewise, using a del Pezzo surface of degree $1$ over $\cplx(t)$, is it possible to produce a rationally connected $3$-fold with $5$-torsion in the Brauer group?   Brauer groups of non-singular Fano $3$-folds are apparently trivial \cite[p. 168]{IskovkikhProkhorov}, but for rationally connected smooth proper varieties whose minimal models are singular Fano $3$-folds, the Brauer group can be non-trivial.
\end{rem}

\begin{footnotesize}
\bibliographystyle{alpha}
\bibliography{nonrationality}
\end{footnotesize}
\end{document}